\newcommand{\x}{\mathbf{x}}
\newcommand{\y}{\mathbf{y}}
\title{The structured distance to singularity of a symmetric tridiagonal Toeplitz  matrix\thanks{%
Received... Accepted... Published online on... Recommended by....
Research partially supported by a grant from SAPIENZA 
Universit\`a di Roma and by INdAM-GNCS.
}}
\author{Silvia Noschese\footnotemark[2]}
\shorttitle{The structured distance to singularity of a symmetric tridiagonal Toeplitz matrix} 
\begin{document}

\maketitle

\renewcommand{\thefootnote}{\fnsymbol{footnote}}

\footnotetext[2]{Dipartimento di Matematica ``Guido Castelnuovo'', SAPIENZA 
Universit\`a di Roma, P.le A. Moro, 2, I-00185 Roma, Italy. E-mail: 
{\tt noschese@mat.uniroma1.it}.}

\begin{center}
{\it Dedicated to Lothar Reichel on the occasion of his 70th birthday.}
\end{center}

\begin{abstract}
This paper is concerned  with the distance of a symmetric tridiagonal Toeplitz matrix $T$ to the  manifold of similarly structured singular matrices, and with determining the closest matrix to $T$ in this manifold.  Explicit formulas are presented, that exploit the analysis of the sensitivity of the spectrum of $T$ with respect to structure-preserving perturbations of its entries. 

\end{abstract}

\begin{keywords}
matrix nearness problem, distance to singularity, eigenvalue conditioning, symmetric tridiagonal Toeplitz structure, 
 structured distance.  
\end{keywords}

\begin{AMS}
65F15, 65F35, 15A12, 15A57
\end{AMS}

\section{Introduction} 
The sensitivity of the solution of a linear algebra problem with respect to perturbations in the data has received considerable attention in the literature.
 This interest can be traced back to the work of Golub and Wilkinson \cite{GW76}, Wilkinson \cite{Wi84, Wi}, Demmel \cite{De87,De92}, and numerous other authors; see, e.g., \cite{GV,SS} and the references therein. The effect of structure-preserving perturbations of the matrix entries has been analyzed in the context of linear systems (see, e.g., \cite{Va90,HH,R12003,R22003}) as well as  for structured eigenproblems.  Measures of structured eigenvalue sensitivity include both structured pseudospectra (see, e.g., \cite{Gr,R2006,D2009,Ka,BGN,NR17}) and {\it structured condition numbers}, introduced by Tisseur \cite{T2003}.  The latter measures have been investigated, e.g., in \cite{KKT,NP06,NPR} and, recently, applied to network analysis \cite{NR22, ENR}.
 
Symmetric tridiagonal Toeplitz matrices arise in several applications, including the numerical solution of ordinary and partial differential equations \cite{Sm,BG05}, and as regularization matrices in Tikhonov regularization for the solution of discrete ill-posed problems \cite{Ha, RY, HNR}. It is therefore important to understand properties of such  structured matrices relevant for computation.

By leveraging structured eigensensitivity analysis, this paper concerns with the distance of symmetric tridiagonal Toeplitz matrices to the  manifold of similarly structured singular matrices, and with determining the closest matrix in this manifold.   The structured  measures we deal with can be computed by endowing the considered subspace of matrices with the Frobenius norm. We remark that estimates on the distance to singularity in the spectral norm of symmetric tridiagonal Toeplitz matrices, with respect to normwise structured perturbations, can be found in the work of Rump \cite{R12003}, however the present paper develops a novel approach based on the structured eigensensitivity analysis that gives rise to an explicit formula for the {\it structured distance to singularity} in the Frobenius norm of such matrices and to upper bounds for their structured distance to singularity in the spectral norm, except for the definite case where one has an explicit formula  also for the latter distance.

We denote symmetric tridiagonal Toeplitz matrices in ${\mathbb R}^{n\times n}$ by
\begin{equation}\label{TRap}
T=(n;\delta, \sigma)=\left[ 
\begin{array}{ccccccc}
\delta & \sigma &  &  &  &  & \Large{O} \\ 
\sigma & \delta & \sigma &  &  &  &  \\ 
& \sigma & \cdot & \cdot &  &  &  \\ 
&  & \cdot & \cdot & \cdot &  &  \\ 
&  &  & \cdot & \cdot & \cdot &  \\ 
&  &  &  & \cdot & \cdot & \sigma \\ 
\Large{O} &  &  &  &  & \sigma & \delta
\end{array}
\right].
\end{equation}
It is well known that the eigenvalues of (\ref{TRap})  are given by 
\begin{equation}\label{lamh}
\lambda_{h}=\delta +2\sigma\,\cos \frac{h\pi}{n+1}, \quad h=1, \dots,n;
\end{equation}
see, e.g., \cite{Sm}. 
 Throughout this paper we refer to $\lambda_{h}$ in \eqref{lamh}  as the $h$th eigenvalue of $T=(n;\delta, \sigma)$. Assume $\sigma \ne 0$. Then $T$ has $n$ simple real eigenvalues
allocated symmetrically with respect 
to $\delta$.  The components of the eigenvector 
$\x_{h}=[x_{h,1},x_{h,2},\ldots,x_{h,n}]^T$ 
associated with the eigenvalue
$\lambda _{h}$ and normalized 
to have unit Euclidean norm, are the following,
\begin{equation}\label{xhk}
x_{h,k}=\sqrt{\frac{2}{n+1}}\sin \frac{hk\pi}{n+1},\quad k=1, \dots,n,\quad h=1, \dots,n;
\end{equation}
see, e.g., \cite{NPR}. Let $\Lambda$ and $X$ denote the diagonal matrix with $\lambda_h$ as $h$th diagonal entry
and the orthogonal matrix with $\x_{h}$ as $h$th column, respectively.

Let $\mathcal{S}$ denote the manifold of singular matrices in  $\mathbb{R}^{n\times n}$.
The distance of $T $ to $\mathcal{S}$ in the Frobenius norm  is given by
\[
d_{F}(T)=\min_{S\in 
{\mathcal S}} \left\| T-S\right\|_{F},
\]
where $\|\cdot\|_F$ stands for the Frobenius norm.
As it is well known, if $\lambda _{k}$ is an eigenvalue of $T$ such that $|\lambda_k|
<|\lambda_h|$, with $h=1,\dots, n, \, h\ne k$, then, thanks to the Eckart-Young theorem \cite{EY}, one has that
\begin{equation}\label{Sstar}
S_*:=\sum_{h=1,\dots, n, h\ne k}\lambda _{h}\x_h\x_h^T
\end{equation}
is the unique matrix in ${\mathcal S}$ such that
\begin{equation}\label{dFT}
d_{F}(T)= \left\| T- S_* \right\|_{F}
=\left\| \lambda_k\x_k\x_k^T\right\|_{F}=|\lambda _{k}|.
\end{equation}
Throughout this paper the superscript  $(\cdot)^T$ stands for transposition and $(\cdot)^H$
for transposition and complex conjugation.  We explicitly observe that Equation \eqref{dFT} holds true if $\lambda _{k}$ is such that $|\lambda_k|
\leq |\lambda_h|$, for $h\ne k$. However, if not all the above inequalities are strict, then $S_*$ in \eqref{Sstar} is not the unique matrix that attains the minimum $\min_{S\in 
{\mathcal S}} \left\| T-S\right\|_{F}$ .

Let ${\cal T}$ denote the subspace of $\mathbb{R}^{n\times n}$ formed by 
symmetric tridiagonal Toeplitz matrices. The structured distance to singularity
of $T$ in the Frobenius norm is given by
\[
d_{F}^{\cal T}(T)=\min_{S\in 
{\mathcal S}\cap{\mathcal T}} \left\| T-S\right\|_{F}.
\]

\noindent Notice that the singular matrix closest  to $T$, i.e., the matrix $S_*$ defined in \eqref{Sstar}, 
is not a tridiagonal Toeplitz matrix, that is to say, $S_* \notin {\cal T}$. Hence, by \eqref{dFT}, one has
\[
d_{F}^{\cal T}(T)> \min_{h=1,\dots,n}|\lambda _{h}|\,.
\]

The property of any $T=(n;\delta, \sigma)$ of  fixed order $n$, with $\sigma\ne0$, to have the same eigenvectors \eqref{xhk}, 
as well as  the knowledge of the worst-case
structured perturbation  for  the eigenvalue $\lambda _{k}$ (see, e.g., \cite{NR19})
are the tools used  in this paper to shed light on the structured distance to singularity of a
symmetric tridiagonal Toeplitz matrix. 

For any matrix $A\in{\mathbb R}^{n\times n}$ let $A|{\cal T}$ denote the matrix in the subspace 
 ${\cal T}$ closest to $A$ with respect to the Frobenius norm. It is straightforward to verify that, in order to have such projection of $A$  in ${\cal T}$,  one takes the symmetric part of the tridiagonal Toeplitz matrix obtained in turn  by replacing in each structure diagonal all the entries of  $A$ with their arithmetic mean. 
 Notice that  the projection $S_*|{\cal T}\notin {\mathcal S}\cap{\mathcal T}$.  In the present paper, we give an explicit formula for the structured distance to singularity of $T\in{\cal T}$, based on 
 the ratios
\[
\frac{|\lambda _{h}|}{\left\| (\x_h\x_h^T)|{\cal T}\right\|_{F}}\,, \qquad h=1,\dots,n,
\] 
 and we illustrate how to find the  closest 
 matrix $S_*^{\cal T}\in {\mathcal S}\cap{\mathcal T}$ to $T$.  
 Additionally, if $T\in{\cal T}$ is positive or negative definite, we show that 
\[
\frac{d_{F}(T)}{d_{F}^{\cal T}(T)}\approx\sqrt{\frac{3}{n}}\,.
\]
 
This paper is organized as follows. Section \ref{sec2} is concerned with the sensitivity analysis of
the spectrum of a symmetric  tridiagonal Toeplitz matrix $T$ with respect to structure-preserving perturbations and
Section \ref{sec22} exploits such analysis to discuss an upper bound for the structured distance to singularity of $T$ in the Frobenius norm.
Section \ref{sec3}  is concerned with determining $S_*^{\cal T}$, i.e., the unique closest singular symmetric  tridiagonal Toeplitz matrix to $T$
in the Frobenius norm, as well as its distance $d_{F}^{\cal T}(T)$ to  $T$. Section \ref{sec4} discusses the structured 
distance to singularity of a symmetric positive definite tridiagonal Toeplitz matrix both in the Frobenius norm and in the spectral norm and shows monotonicity properties of the entries of its Cholesky factor. In Section \ref{secx} the cases relevant to indefinite matrices in ${\cal T}$ are investigated.
Numerical examples are presented in  
Section \ref{sec5} and concluding remarks can be found in Section \ref{sec6}.

\section{Eigenvalue structured sensitivity}\label{sec2}
Let $\mathcal{C}$ be any subspace of ${\mathbb C}^{n\times n}$ formed by matrices
with a given symmetry-structure, that is to say a structure that exhibits a kind of symmetry, like reflection or translation. Examples of structured matrices of such kind are complex symmetric, skew-symmetric, persymmetric, skew-persymmetric matrices, or, more specifically, Toeplitz and Hankel  matrices; see, e.g.,  \cite[Section 5]{NP07}. 
Let $\lambda\in{\mathbb C}$ be a simple eigenvalue of a given matrix $C\in\mathcal{C}$, 
with corresponding  right and left eigenvectors
$\x$ and $\y$ of unit Euclidean norm. 
A well-known indicator of the
sensitivity of $\lambda$ to perturbations  of the entries of $C$ is given by analyzing the 
coefficient of the first term  in the expansion of powers of $\varepsilon$ of $\lambda(\varepsilon)$, 
where $\lambda(\varepsilon)$ is the eigenvalue of $C+\varepsilon E$, with $\|E\|_F=1$, that tends toward $\lambda$ when
$\varepsilon$ goes to zero:
\[
\lambda(\varepsilon)=\lambda+\frac{\y^HE\x}{\y^H\x}\,\varepsilon +O(\varepsilon ^2).
\]
In fact, the traditional condition number $\kappa(\lambda)$  is the first-order measure of the worst-case effect on $\lambda$ of perturbations in $C$, i.e., 
\[
\kappa(\lambda)=\max_{E\in {\mathbb C}^{n\times n},\,\, \|E\|_F=1}\left|\frac{\y^HE\x}{\y^H\x}\right|=\frac{1}{\y^H\x},
\]
and the {\it worst-case perturbation} is proven to be $E=\y\x^H$; see \cite[Section\,2]{Wi}. 

We are interested in  the sensitivity of the eigenvalues of $C\in{\cal C}$ with respect to structure-preserving perturbations.  Let $(\y\x^H)|{\cal C}$  denote the matrix in the subspace ${\cal C}$ closest to $\y\x^H$ with respect to the Frobenius norm. Following the approach above, and admitting only unit-norm perturbation matrices $E$ that belong to $\mathcal{C}$,
leads to the {\it structured condition number}  $\kappa^{\cal C}(\lambda)$ as the first-order measure of the worst-case effect on $\lambda$ of perturbations of the same structure as $C$, that is to say,
\[
\kappa^{\cal C}(\lambda)=\max_{E\in {\cal C},\,\, \|E\|_F=1}\left|\frac{\y^HE\x}{\y^H\x}\right|=\frac{\|(\y\x^H)|{\cal C}\|_{F}}{\y^H\x};
\]
see \cite{KKT,NP07}. Thus
\[
\frac{\kappa^{\cal C}(\lambda)}{\kappa(\lambda)}=\|(\y\x^H)|{\cal C}\|_{F}\leq 1.
\]
Moreover, thanks to \cite[Lemma 3.2]{NP07}, which claims that
\begin{equation}\label{nrmsq}
\y^H(\y\x^H) |{\cal C}\,\x=\|(\y\x^H) |{\cal C}\|^2_{F},
\end{equation}
one has that the {\it worst-case structured perturbation} is given by 
\[
E=\frac{\y\x^H |{\cal C}}{\|\y\x^H |{\cal C}\|_{F}}.
\]

Consider  now the  symmetry-structured subspace ${\cal T}$ of $\mathbb{R}^{n\times n}$. Since $T \in {\cal T}$ is 
symmetric, then for any (real) eigenvalue $\lambda_h$ in \eqref{lamh}, one has 
$\x_h=\y_h\in {\mathbb R}^n$ and 
\begin{equation}\label{klam0}
\kappa^{\cal T}(\lambda_h)=\|(\x_h\x_h^T)|{\cal T}\|_{F}.
\end{equation}
Explicit formulas for the  structured condition numbers $\kappa^{\cal T}(\lambda_h)$ for 
the eigenvalues  $\lambda_h$, with $h=1,\dots, n$, of $T=(n;\delta,\sigma)$, with $\sigma\ne 0$, 
are the following,
\begin{equation}\label{klam}
\kappa^{\cal T}(\lambda_h)=\sqrt{\frac{1}{n} +\frac{2}{n-1}\cos^2 \frac{h\pi}{n+1}},\;\;\;\;h=1,\dots, n;
\end{equation}
see \cite[Proposition 4.11]{NR19}.  As the eigenvectors \eqref{xhk}, the  structured
condition numbers \eqref{klam} do not depend on $\delta$ and $\sigma$.
Figure \ref{f_klam} shows the structured condition numbers of the eigenvalues of  
a $100\times 100$ symmetric tridiagonal Toeplitz matrix;  see \cite[Figure 2]{NR19}. 
\begin{figure}[tbp]

\centering
\includegraphics[scale=0.45]{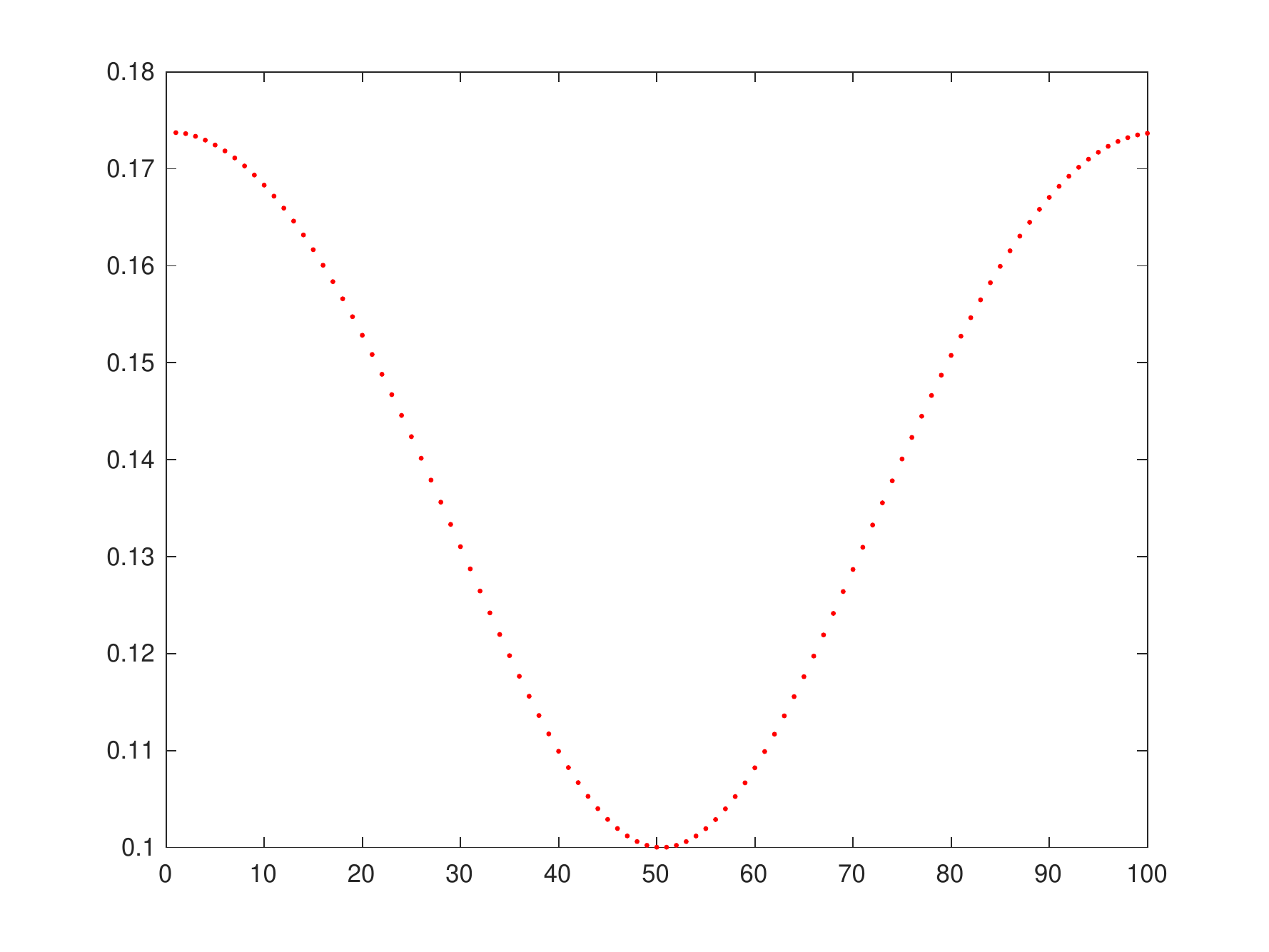}
\caption{Structured eigenvalue condition numbers for the matrix 
$T=(100;\delta,\sigma)$, where  $\sigma$ and $\delta$ are arbitrarily chosen real parameters. 
The horizontal axis shows the index of the eigenvalue $\lambda_h$, $h=1,\dots,100$, and the vertical axis 
the structured condition numbers $\kappa^{\cal T}(\lambda_h)$ in \eqref{klam}.}\label{f_klam}
\end{figure}

The minimum value in \eqref{klam} is attained at $h=(n+1)/2$, if $n$ is odd:
\begin{equation*}
\kappa^{\cal T}(\lambda_{(n+1)/2})=\sqrt{\frac{1}{n}}.
\end{equation*}
If $n$ is even, the smallest structured condition numbers are relevant to the two most central eigenvalues in the spectrum of $T$, having
\[
\kappa^{\cal T}(\lambda_{n/2})=\kappa^{\cal T}(\lambda_{(n+2)/2})=\sqrt{\frac{1}{n} +\frac{2}{n-1}\cos^2 \frac{n\pi}{2(n+1)}}\geq\sqrt{\frac{1}{n}}\,,
\]
and,  for large dimension $n$,
\begin{equation}\label{n2}
\kappa^{\cal T}(\lambda_{n/2})=\kappa^{\cal T}(\lambda_{(n+2)/2})\approx\sqrt{\frac{1}{n}}.
\end{equation}
Conversely,  the two extremal eigenvalues  in the spectrum of $T$ have  the largest structured condition numbers, and for large $n$ one has the following estimate
\begin{equation}\label{n3}
\kappa^{\cal T}(\lambda_1)=\kappa^{\cal T}(\lambda_n)\approx\sqrt{\frac{1}{n} +\frac{2}{n-1}}
=\sqrt{\frac{3n-1 }{n^2-n}}\approx\sqrt{\frac{3}{n}}\,.
\end{equation}
Figure \ref{E} provides an illustration of the unit norm matrices $\x_1\x_1^T$ and $\x_{50} \x_{50}^T$ for a $100\times 100$ symmetric tridiagonal Toeplitz matrix.  It is apparent that the weight of the three central diagonals of the worst-case perturbation $\x_1\x_1^T$ for the extremal eigenvalue $\lambda_1$, shown in the left picture,  is larger than the weight of the relevant diagonals of the worst-case perturbation $\x_{50} \x_{50}^T$ for the  central eigenvalue $\lambda_{50}$, depicted in the right picture. 
In fact, for $n\geq 10$, the arithmetic means of the three central diagonals of the worst-case perturbation matrices for the two  extremal eigenvalues  of  $T$ give rise to structured condition numbers 
more than $70\%$ larger than the ones produced by the arithmetic means of the relevant diagonals of the worst-case perturbation matrices for the two most central eigenvalues.
Figure \ref{ratios} shows the ratios $\kappa^{\cal T}(\lambda_{n})/\kappa^{\cal T}(\lambda_{n/2})$, for even $n$, and $\kappa^{\cal T}(\lambda_{n})/\kappa^{\cal T}(\lambda_{(n+1)/2})$, for odd $n$, with $n=2,\dots,100$.

\begin{figure}[tbp]
\centering
\includegraphics[scale=0.34]{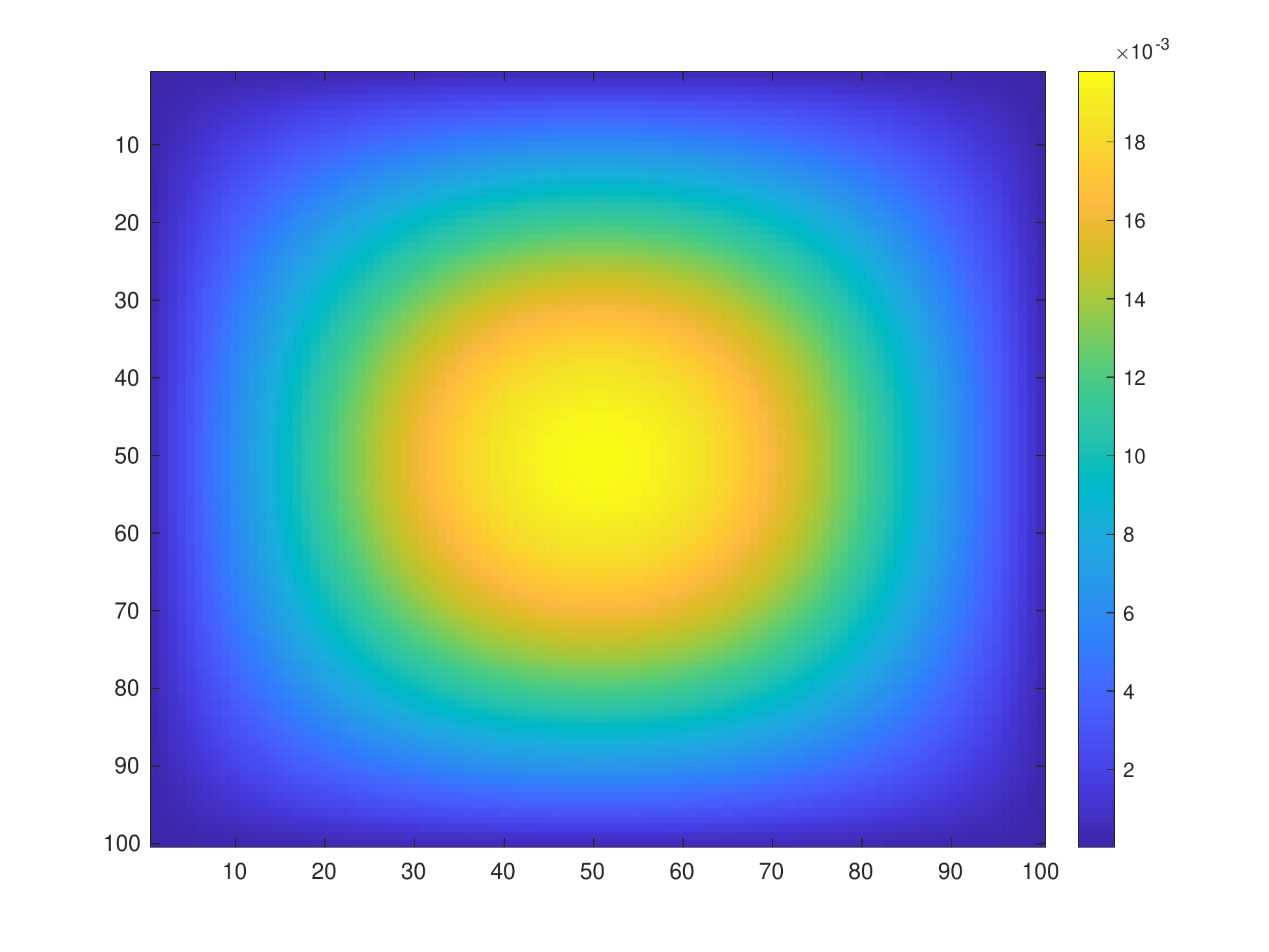}\includegraphics[scale=0.34]{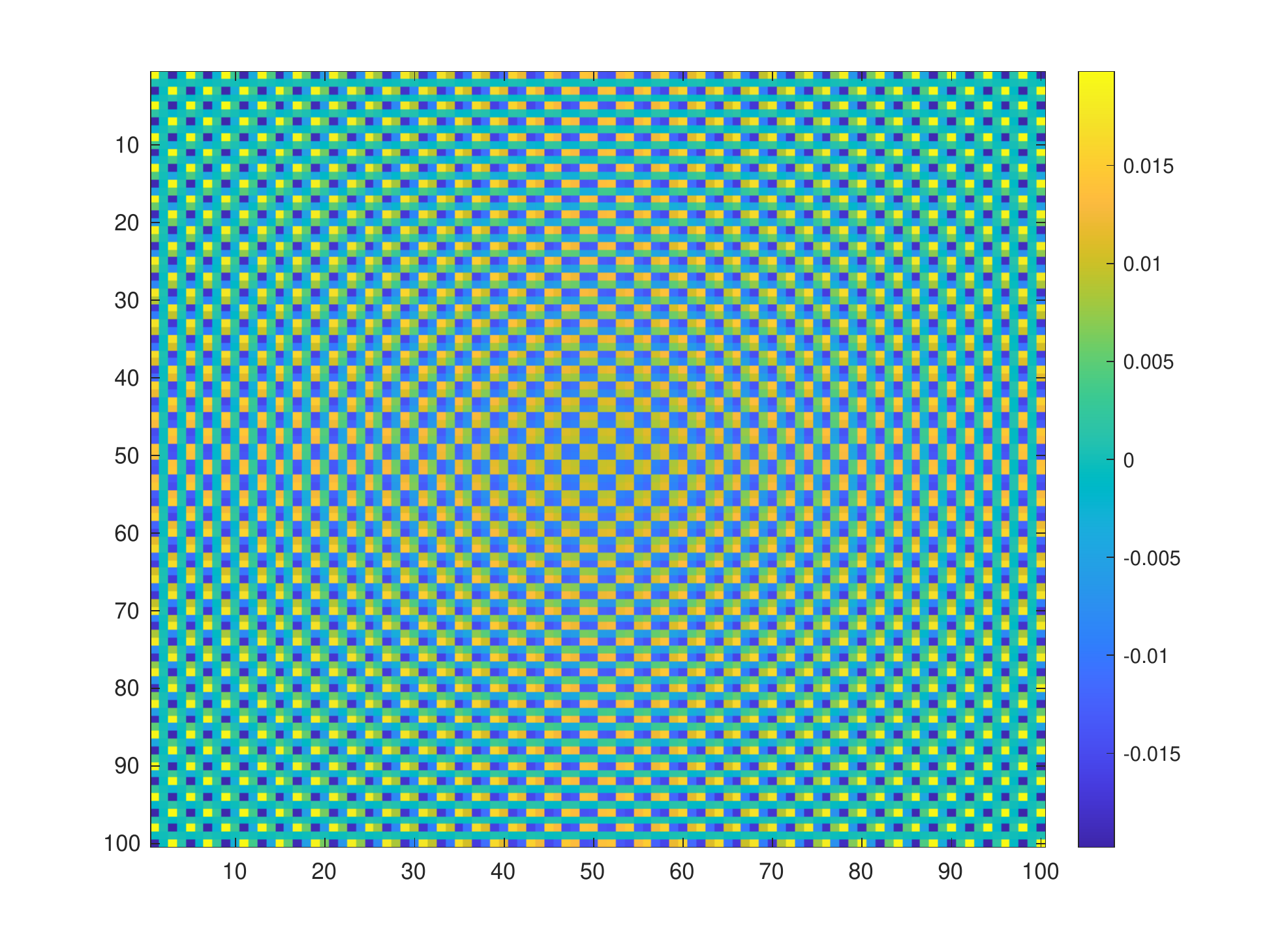}
\caption{Images of  the worst-case perturbations $\x_1\x_1^T$ and $\x_{50} \x_{50}^T$ for the matrix 
$T=(100;\delta,\sigma)$, where  $\sigma$ and $\delta$ are arbitrarily chosen real parameters.  The images are obtained by using the MATLAB {\rm imagesc} function, with {\rm colorbar}. }\label{E}
\end{figure}

\begin{figure}[tbp]
\centering
\includegraphics[scale=0.45]{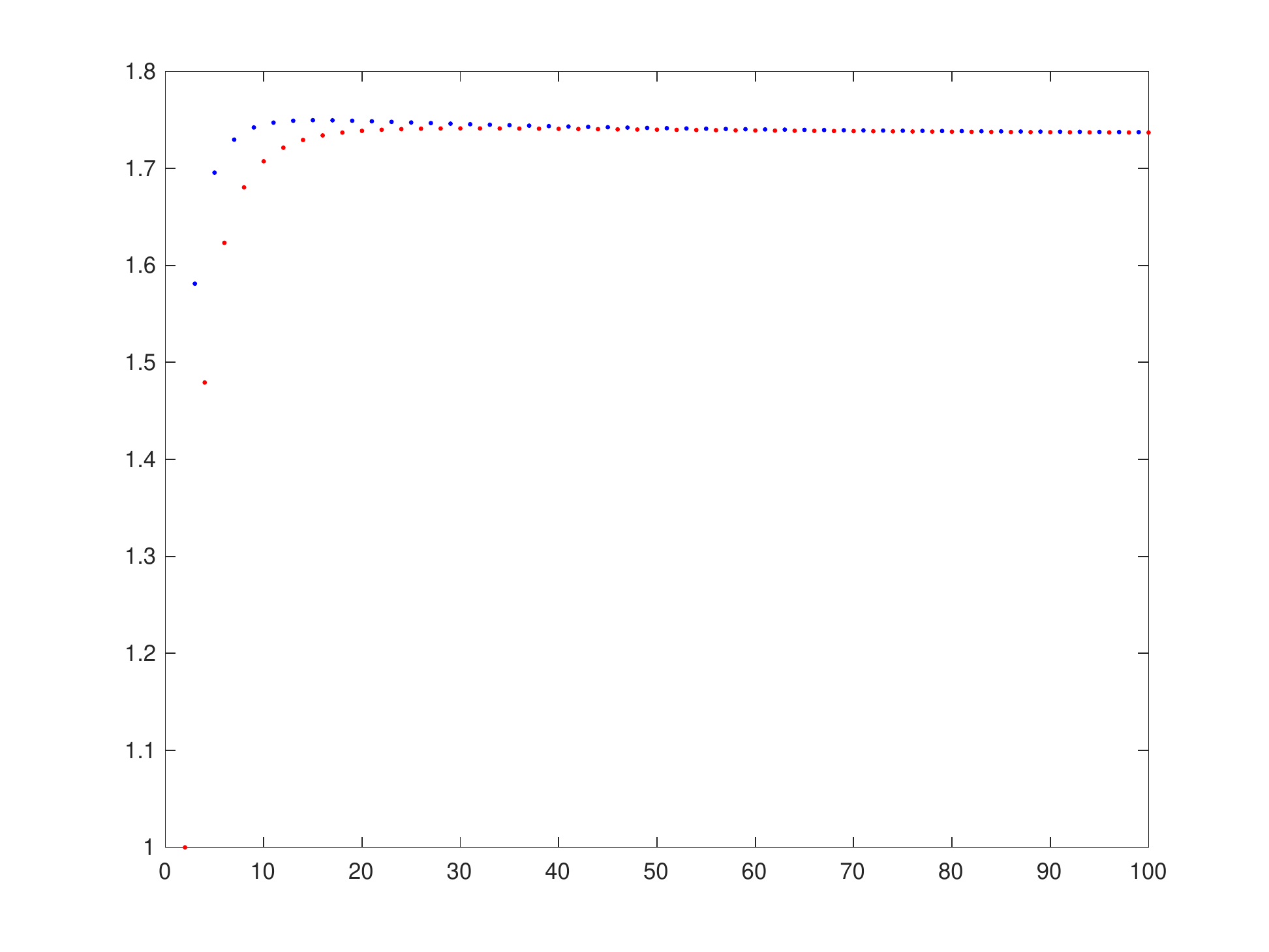}
\caption{Ratios between the largest and the smallest structured condition numbers for the matrix $T=(n;\delta,\sigma)$, where  $\sigma$ and $\delta$ are arbitrarily chosen real parameters and $n=2,\dots,100$. The horizontal axis shows the dimension $n$ of $T$ and the vertical axis the relevant  ratios $\kappa^{\cal T}(\lambda_{n})/\kappa^{\cal T}(\lambda_{n/2})$ in red, when $n$ is even, and $\kappa^{\cal T}(\lambda_{n})/\kappa^{\cal T}(\lambda_{(n+1)/2})$ in blue, when $n$ is odd.}\label{ratios}
\end{figure}

\section{An upper bound for the structured distance to singularity}\label{sec22}
The structured conditioning analysis in Section \ref{sec2}  can be exploited to obtain an upper bound for the structured distance to singularity of $T\in{\cal T}$ in the Frobenius norm.
\begin{theorem}\label{t_ansatz}
Consider  $T=(n;\delta,\sigma)$. One has that 
\begin{equation}\label{ansatz}
T-\lambda_h\frac{(\x_h\x_h^T) |{\cal T}}{\|(\x_h\x_h^T) |{\cal T}\|_{F}^2},
\end{equation}
where $\x_h$ is  a unit-norm eigenvector corresponding to $\lambda_h$, is a symmetric tridiagonal Toeplitz matrix with null $h$th eigenvalue. Its distance to $T$ in the Frobenius norm is
given by the ratio $| \lambda_h|/\kappa^{\cal T}(\lambda_h)$, where $\kappa^{\cal T}(\lambda_h)$ is the structured condition number of $\lambda_h$.
\end{theorem}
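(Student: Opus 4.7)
The plan is to verify the three assertions of Theorem \ref{t_ansatz} in turn, relying only on structural facts about ${\cal T}$ already recorded in the paper: that ${\cal T}$ is a real linear subspace, that every matrix in ${\cal T}$ has $\x_1,\dots,\x_n$ of \eqref{xhk} as an orthonormal eigenbasis, and the identity \eqref{nrmsq}.

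First, I would note that $(\x_h\x_h^T)|{\cal T}\in{\cal T}$ by definition of the projection, and $T\in{\cal T}$ by hypothesis. Since ${\cal T}$ is a linear subspace, the matrix displayed in \eqref{ansatz} is a real linear combination of elements of ${\cal T}$ and is therefore itself a symmetric tridiagonal Toeplitz matrix.

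Second, to show that its $h$th eigenvalue vanishes, I would use the fact that $\x_h$ is an eigenvector of every matrix in ${\cal T}$: the formula \eqref{xhk} depends only on $n$, and in the degenerate case $\sigma=0$ the matrix is a multiple of the identity and $\x_h$ remains an eigenvector. Hence the $h$th eigenvalue of the matrix in \eqref{ansatz} equals
\[
\x_h^T T \x_h \;-\; \frac{\lambda_h}{\|(\x_h\x_h^T)|{\cal T}\|_F^2}\;\x_h^T \bigl[(\x_h\x_h^T)|{\cal T}\bigr]\x_h .
\]
The first term is $\lambda_h$. For the second, I would invoke \eqref{nrmsq} with $\y=\x=\x_h$ and ${\cal C}={\cal T}$, giving $\x_h^T\bigl[(\x_h\x_h^T)|{\cal T}\bigr]\x_h = \|(\x_h\x_h^T)|{\cal T}\|_F^2$. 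The two terms cancel and the $h$th eigenvalue is $0$.

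Third, the Frobenius distance to $T$ follows by pulling out the scalar,
\[
\left\| \lambda_h\, \frac{(\x_h\x_h^T)|{\cal T}}{\|(\x_h\x_h^T)|{\cal T}\|_F^2} \right\|_F = \frac{|\lambda_h|}{\|(\x_h\x_h^T)|{\cal T}\|_F} = \frac{|\lambda_h|}{\kappa^{\cal T}(\lambda_h)},
\]
where the last equality is \eqref{klam0}. I do not foresee a real obstacle: the whole argument is essentially a single application of \eqref{nrmsq} on top of the shared eigenbasis of ${\cal T}$. The only point requiring a moment of care is the justification that $\x_h$ remains an eigenvector of the perturbed matrix, which is automatic once one recognises that the perturbed matrix lies in ${\cal T}$.
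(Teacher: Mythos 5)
Your proof is correct and follows essentially the same route as the paper's: apply \eqref{nrmsq} with $\y=\x=\x_h$ to see that the $h$th Rayleigh quotient of the perturbed matrix vanishes, then use \eqref{klam0} to identify the Frobenius distance as $|\lambda_h|/\kappa^{\cal T}(\lambda_h)$. The only difference is that you spell out explicitly (and correctly) the two structural facts the paper leaves tacit — that ${\cal T}$ is a subspace, so \eqref{ansatz} lies in ${\cal T}$, and that every matrix in ${\cal T}$ shares the eigenbasis \eqref{xhk}, so the Rayleigh quotient really is the $h$th eigenvalue.
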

\begin{proof}
By \eqref{nrmsq}, one has
\[
\x_h^T\,\left(T-\lambda_h\frac{(\x_h\x_h^T) |{\cal T}}{\|(\x_h\x_h^T) |{\cal T}\|_{F}^2}\right)\,\x_h=
\x_h^T\,T\,\x_h-\frac{\lambda_h}{\|(\x_h\x_h^T) |{\cal T}\|_{F}^2}\x_h^T\,(\x_h\x_h^T) |{\cal T}\,\x_h
\]
\[
=\lambda_h-\frac{\lambda_h}{\|(\x_h\x_h^T) |{\cal T}\|_{F}^2}\|(\x_h\x_h^T) |{\cal T}\|_{F}^2=0.
\]
Thus, counting eigenvalues as in \eqref{lamh}, the $h$th eigenvalue of the symmetric tridiagonal Toeplitz matrix in \eqref{ansatz} is zero.  By \eqref{klam0}, one has
\[
\left\|\lambda_h\frac{(\x_h\x_h^T) |{\cal T}}{\|(\x_h\x_h^T) |{\cal T}\|_{F}^2}\right\|_F=\frac{| \lambda_h|}{\|(\x_h\x_h^T) |{\cal T}\|_{F}}=
\frac{| \lambda_h|}{\kappa^{\cal T}(\lambda_h)},
\]
which concludes the proof.
\end{proof}

We observe that Theorem \ref{ansatz} implies an upper bound for the structured distance to singularity of $T$ in the Frobenius norm, that is to say,
\[
\min_{h=1,\dots,n}|\lambda _{h}|= d_{F}(T)\leq d_{F}^{\cal T}(T)\leq 
\min_{h=1,\dots, n}\frac{| \lambda_h|}{\kappa^{\cal T}(\lambda_h)}.
\]

\section{Structured distance to singularity} \label{sec3}
According to Theorem \ref{t_ansatz}, the matrix \eqref{ansatz} belongs to ${\mathcal S}\cap{\mathcal T}$.
Is \eqref{ansatz} the matrix in ${\mathcal T}$ with null $h$th eigenvalue closest to $T$ in the Frobenius norm? The following results address such issue.

\begin{theorem}\label{t_sdstar}
Consider  $T=(n;\delta,\sigma)$. The unique symmetric  tridiagonal Toeplitz matrix with null $h$th eigenvalue  closest to $T$ in the Frobenius norm is $S_*^{\mathcal T}=(n;\delta_*,\sigma_*)$, where
\begin{equation}\label{sdstar}
\delta_*= \frac{2(nc^2\delta -(n-1)c\sigma)}{n-1+2nc^2}, \;\;  \;\; \sigma_*= \frac{(n-1)\sigma -nc\delta}{n-1+2nc^2}, \;\;\;\;{\mbox with}\;\;c=\cos \frac{h\pi}{n+1}.
\end{equation}
One has  
\begin{equation}\label{dFT2}
\|T-S_*^{\mathcal T}\|_F= \frac{| \delta + 2 c\sigma|}{\sqrt{\frac{1}{n} +\frac{2c^2}{n-1}}}.
\end{equation}

\end{theorem}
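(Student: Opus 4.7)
The plan is to exploit the fact that $\cal T$ is only two-dimensional: writing $S=(n;\tilde\delta,\tilde\sigma)\in\cal T$, formula \eqref{lamh} gives the $h$th eigenvalue of $S$ as $\tilde\delta+2c\tilde\sigma$ with $c=\cos\frac{h\pi}{n+1}$, so the vanishing of the $h$th eigenvalue is a single linear constraint $\tilde\delta+2c\tilde\sigma=0$ in the two parameters. Since
\[
\|T-S\|_F^2=n(\delta-\tilde\delta)^2+2(n-1)(\sigma-\tilde\sigma)^2
\]
is a strictly convex quadratic in $(\tilde\delta,\tilde\sigma)$, the minimum over this affine line is attained at a unique point, which gives uniqueness of $S_*^{\cal T}$ for free and reduces the problem to a two-variable constrained least-squares problem.

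Next I would eliminate $\tilde\delta=-2c\tilde\sigma$ and differentiate $n(\delta+2c\tilde\sigma)^2+2(n-1)(\sigma-\tilde\sigma)^2$ in $\tilde\sigma$; the first-order condition is the scalar linear equation $(n-1+2nc^2)\tilde\sigma=(n-1)\sigma-nc\delta$, which yields $\sigma_*$ as in \eqref{sdstar} and, back-substituted, the expression for $\delta_*$. A Lagrange-multiplier argument would produce the same equation with no more work.

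To obtain \eqref{dFT2}, I would compute the residuals
\[
\delta-\delta_*=\frac{(n-1)(\delta+2c\sigma)}{n-1+2nc^2},\qquad
\sigma-\sigma_*=\frac{nc(\delta+2c\sigma)}{n-1+2nc^2},
\]
insert them into $\|T-S_*^{\cal T}\|_F^2$, and factor out $(\delta+2c\sigma)^2/(n-1+2nc^2)^2$; the bracketed sum $n(n-1)^2+2(n-1)n^2c^2$ factors as $n(n-1)(n-1+2nc^2)$, cancelling one copy of the denominator and leaving $\|T-S_*^{\cal T}\|_F^2=n(n-1)(\delta+2c\sigma)^2/(n-1+2nc^2)$. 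Recognising $|\delta+2c\sigma|=|\lambda_h|$ by \eqref{lamh} and the remaining fraction as $1/\kappa^{\cal T}(\lambda_h)^2$ by \eqref{klam} yields \eqref{dFT2}, and incidentally shows that the upper bound of Theorem \ref{t_ansatz} is attained. No conceptual obstacle arises: the main care is algebraic, keeping the common factor $(\delta+2c\sigma)/(n-1+2nc^2)$ visible in both residuals so that the cancellation producing $\kappa^{\cal T}(\lambda_h)$ is transparent.
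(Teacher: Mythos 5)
Your proposal is correct and matches the paper's own argument: the paper likewise writes $S=(n;d,s)$, imposes the linear constraint $d=-2cs$ from \eqref{lamh}, reduces $\|T-S\|_F^2$ to the same quadratic in $s$, differentiates to find $\sigma_*$, and back-substitutes to obtain $\delta_*$ and then \eqref{dFT2}. Your explicit residual computation and the strict-convexity remark for uniqueness are minor elaborations of steps the paper leaves implicit, but the method is the same.
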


\begin{proof}
We seek to determine  a matrix $S=(n;d,s) \in {\mathcal T}$ such that its $h$th eigenvalue is $0$. By \eqref{lamh}, one has 
$d= -2s\cos \frac{h\pi}{n+1}$. Thus, $S=(n;-2cs,s)$, with $c=\cos \frac{h\pi}{n+1}$, and the squared distance to $T$ can
be expressed as
\begin{equation}\label{SmT}
\|T-S\|_F^2=2(2nc^2+n-1)s^2+4(nc\delta+( 1-n)\sigma)s+n\delta^2+2(n-1)\sigma^2.
\end{equation}
Differentiating \eqref{SmT} with respect to $s$ and equating to zero  straightforwardly yields the quantity  
$ \sigma_*$ in \eqref{sdstar}. Computing $-2c\sigma_*$ gives $ \delta_*$ in \eqref{sdstar}.  Finally,  replacing $s$  in \eqref{SmT} by $ \sigma_*$ in \eqref{sdstar}, dividing both numerator and denominator by $n(n-1)$, and taking the square root, gives \eqref{dFT2} and concludes the proof.
\end{proof}

\begin{corollary}\label{c_dFT3}
Consider $T=(n;\delta,\sigma)$.  The symmetric tridiagonal Toeplitz matrix \eqref{ansatz} is the unique matrix in ${\mathcal T}$ with null $h$th eigenvalue  closest to $T$ in the Frobenius norm.
\end{corollary}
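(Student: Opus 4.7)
The plan is to combine Theorem~\ref{t_ansatz} with the uniqueness clause of Theorem~\ref{t_sdstar}, rather than computing the entries of the matrix in \eqref{ansatz} directly and matching them against \eqref{sdstar}. The latter route is possible but requires expanding $(\x_h\x_h^T)|{\mathcal T}$ in terms of the arithmetic means of the three structure diagonals of $\x_h\x_h^T$ and then simplifying trigonometric sums, which is routine but tedious.

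First I would recall from Theorem~\ref{t_ansatz} that the matrix in \eqref{ansatz} already lies in ${\mathcal T}$, has null $h$th eigenvalue, and satisfies
\[
\left\| T-\lambda_h\frac{(\x_h\x_h^T) |{\cal T}}{\|(\x_h\x_h^T) |{\cal T}\|_{F}^2}\right\|_F = \frac{|\lambda_h|}{\kappa^{\cal T}(\lambda_h)}.
\]
Next I would substitute the explicit expressions \eqref{lamh} and \eqref{klam}, namely $\lambda_h = \delta + 2c\sigma$ and $\kappa^{\cal T}(\lambda_h) = \sqrt{1/n + 2c^2/(n-1)}$ with $c=\cos(h\pi/(n+1))$, to rewrite this distance as
\[
\frac{|\delta+2c\sigma|}{\sqrt{\frac{1}{n}+\frac{2c^2}{n-1}}},
\]
which is precisely the value \eqref{dFT2} of $\|T-S_*^{\mathcal T}\|_F$ computed in Theorem~\ref{t_sdstar}.

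Finally I would invoke the uniqueness statement in Theorem~\ref{t_sdstar}: among all symmetric tridiagonal Toeplitz matrices with null $h$th eigenvalue, the minimum Frobenius distance to $T$ is attained at a unique matrix, namely $S_*^{\mathcal T}$. Since the matrix in \eqref{ansatz} is one such matrix and attains the minimum distance \eqref{dFT2}, it must coincide with $S_*^{\mathcal T}$.

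There is no real obstacle here; the only point that deserves attention is to make sure the minimum distance from Theorem~\ref{t_sdstar} and the distance from Theorem~\ref{t_ansatz} are expressed in the same variables before comparison, so that the coincidence is immediate. Once this is observed, the uniqueness clause closes the argument without any further computation.
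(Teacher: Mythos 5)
Your proposal is correct and follows essentially the same route as the paper: observe that the distance $|\lambda_h|/\kappa^{\cal T}(\lambda_h)$ from Theorem~\ref{t_ansatz} coincides with the value \eqref{dFT2} from Theorem~\ref{t_sdstar}, and then invoke the uniqueness clause of Theorem~\ref{t_sdstar} to conclude that the matrix in \eqref{ansatz} equals $S_*^{\mathcal T}$. The only difference is that you make the identification of the two distance expressions explicit by substituting \eqref{lamh} and \eqref{klam}, whereas the paper simply states that the coincidence is observed.
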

\begin{proof}
One observes that the distance in \eqref{dFT2} coincides with the ratio between the absolute value of the eigenvalue $\lambda _{h}$ and its structured condition number $\kappa^{\cal T}(\lambda_h)$. Since Theorem \ref{t_ansatz} claims that the matrix in \eqref{ansatz} is a symmetric  tridiagonal Toeplitz matrix  with null $h$th eigenvalue at such distance to $T$ whereas   Theorem \ref{t_sdstar}  claims that $S_*^{\mathcal T}=(n;\delta_*,\sigma_*)$ is the unique matrix in ${\mathcal T}$  with null $h$th eigenvalue  closest to $T$ in the Frobenius norm, the proof is concluded.
\end{proof}

We are now in a position to determine $S_*^{\mathcal T}=\arg\min_{S \in {\mathcal S}\cap{\mathcal T} }\| T -  S\|_F $
and its distance to $T$, i.e., $d_F^{\cal T}(T)= \| T- S_*^{\mathcal T} \|_F$.
\begin{theorem}\label{fins}
Consider $T=(n;\delta,\sigma)$ and let $h\in\{1,\dots,n\}$ be such that 
\begin{equation*}
	\frac{|\lambda_h|}{\kappa^{\cal T}(\lambda_h)}\leq\frac{|\lambda_k|}{\kappa^{\cal T}(\lambda_k)}, \quad \;\;  k=1,\dots,n.
\end{equation*}
Then the symmetric tridiagonal matrix $S_*^{\mathcal T}=(n;\delta_*,\sigma_*)$ built as in Theorem \ref{t_sdstar} attains the minimum $\min_{S\in{\mathcal S}\cap{\mathcal T}} \|T-S\|_F$. Hence, the structured distance to singularity of $T$ is
\begin{equation}\label{strdist}
	d_F^{\cal T}(T)=\|T-S_*^{\mathcal T}\|_F=\frac{|\lambda_h|}{\kappa^{\cal T}(\lambda_h)}.
\end{equation}
Moreover, if 
\begin{equation}\label{strcond}
	\frac{|\lambda_h|}{\kappa^{\cal T}(\lambda_h)}<\frac{|\lambda_k|}{\kappa^{\cal T}(\lambda_k)}, \quad \;\;  k=1,\dots,n, \;\;k\ne h,
\end{equation}
then  $S_*^{\mathcal T}=(n;\delta_*,\sigma_*)$ is the unique closest matrix in ${\mathcal S}\cap{\mathcal T}$ to $T$.
\end{theorem}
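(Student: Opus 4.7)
The plan is to reduce the global minimization over $\mathcal{S}\cap\mathcal{T}$ to the $n$ sub-problems already solved by Corollary \ref{c_dFT3}, and then pick the best of them.

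First, I would characterize the feasible set. Every $S=(n;d,s)\in\mathcal{T}$ has spectrum of the form \eqref{lamh}, so $S$ is singular precisely when $d+2s\cos(k\pi/(n+1))=0$ for at least one index $k\in\{1,\dots,n\}$. Writing $\mathcal{T}_k^0\subset\mathcal{T}$ for the subset of matrices whose $k$th eigenvalue vanishes, this yields the decomposition
\[
\mathcal{S}\cap\mathcal{T}\;=\;\bigcup_{k=1}^{n}\mathcal{T}_k^0.
\]

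The second step is the key one. Corollary \ref{c_dFT3} (together with Theorem \ref{t_sdstar}) identifies, for each $k$, the unique element of $\mathcal{T}_k^0$ closest to $T$ in the Frobenius norm, namely the matrix of the form \eqref{sdstar} built with that value of $k$, at distance exactly $|\lambda_k|/\kappa^{\mathcal{T}}(\lambda_k)$. Swapping the order of the two minima,
\[
\min_{S\in\mathcal{S}\cap\mathcal{T}}\|T-S\|_F\;=\;\min_{k=1,\dots,n}\;\min_{S\in\mathcal{T}_k^0}\|T-S\|_F\;=\;\min_{k=1,\dots,n}\frac{|\lambda_k|}{\kappa^{\mathcal{T}}(\lambda_k)}\;=\;\frac{|\lambda_h|}{\kappa^{\mathcal{T}}(\lambda_h)},
\]
where the last equality is the defining property of $h$. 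This already gives \eqref{strdist}, and the attaining matrix is precisely the $S_*^{\mathcal{T}}=(n;\delta_*,\sigma_*)$ produced by formula \eqref{sdstar} with this choice of $h$.

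For uniqueness under \eqref{strcond} I would argue as follows. Any optimal $S^\star\in\mathcal{S}\cap\mathcal{T}$ lies in some $\mathcal{T}_k^0$; if $k\ne h$, then $\|T-S^\star\|_F\geq|\lambda_k|/\kappa^{\mathcal{T}}(\lambda_k)>|\lambda_h|/\kappa^{\mathcal{T}}(\lambda_h)$ by the strict hypothesis, contradicting optimality. Hence $S^\star\in\mathcal{T}_h^0$, and within this set Theorem \ref{t_sdstar} already asserts that $(n;\delta_*,\sigma_*)$ is the sole minimizer, so $S^\star=S_*^{\mathcal{T}}$. No step presents a real obstacle; the only mild subtlety to keep in mind is that the sets $\mathcal{T}_k^0$ are not disjoint (for instance, the zero matrix belongs to every one of them), but this never affects the argument, since uniqueness is driven by the strict comparison of distances across the sub-problems rather than by any disjoint-union reasoning.
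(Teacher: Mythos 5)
Your proposal is correct and matches the intended argument: the paper states Theorem~\ref{fins} without a formal proof, treating it as an immediate consequence of Theorem~\ref{t_sdstar} and Corollary~\ref{c_dFT3}, and your decomposition of $\mathcal{S}\cap\mathcal{T}$ into the sets $\mathcal{T}_k^0$ followed by minimizing over $k$ is exactly the reasoning that justifies it. Your remark that the $\mathcal{T}_k^0$ need not be disjoint, and that uniqueness nonetheless follows from the strict comparison of the sub-problem optima together with the per-index uniqueness of Theorem~\ref{t_sdstar}, correctly addresses the only subtlety.
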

\begin{remark}\label{rmknrm1}
Let  $d_2(T)$ denote the distance to singularity of $T=(n;\delta,\sigma)$ in the spectral norm. If $\lambda _{k}$ is such that $|\lambda_k|
<|\lambda_j|$, with $j=1,\dots, n, j\ne k$, i.e., if $|\lambda _{k}|$ is the unique smallest singular value of $T$, then
the matrix $S_*$  defined in \eqref{Sstar} is the unique matrix in ${\mathcal S}$ such that
\[
d_{2}(T)=\min_{S\in 
{\mathcal S}} \left\| T-S\right\|_{2}= \left\| T- S_* \right\|_{2}
=\left\| \lambda_k\x_k\x_k^T\right\|_{2}=|\lambda _{k}|,
\]
where $\|\cdot\|_2$ stands for the spectral norm.  Hence  $d_{2}(T)=d_{F}(T)$.
It is now interesting to observe that Theorem \ref{fins} gives an upper bound for the structured distance to singularity of $T$ in the spectral norm $d_{2}^{\cal T}(T)=\min_{S\in {\mathcal S}\cap{\mathcal T}} \left\| T-S\right\|_{2}\,$. 
 In fact, if 
$$
	\frac{|\lambda_h|}{\kappa^{\cal T}(\lambda_h)}\leq\frac{|\lambda_j|}{\kappa^{\cal T}(\lambda_j)}, \quad \;\;  j=1,\dots,n, 
$$
then $S_*^{\mathcal T}=(n;\delta_*,\sigma_*) \in {\mathcal S}\cap{\mathcal T}$, with $\delta_*$, $\sigma_*$, and $c$ in \eqref{sdstar}, is such that
$$
d_2^{\cal T}(T)\leq \|T-S_*^{\mathcal T}\|_2= \|X(\Lambda-\Lambda_*)X^T\|_2= \|\Lambda-\Lambda_*\|_2,
$$
where  we denote by $\Lambda_*$  the diagonal matrix having as $j$th diagonal entry the $j$th eigenvalue of $S_*^{\mathcal T}$, for $j=1,\dots,n$. Then, since
$$
 \|\Lambda-\Lambda_*\|_2=
 \frac{|\lambda_h|}{n-1+2nc^2}(n-1+2n|c|\cos\frac{\pi}{n+1}),
 $$
one has the following lower and upper bounds for $d_2^{\cal T}(T)$:
\begin{equation}\label{norm2}
|\lambda _{k}|=d_{2}(T)\leq d_2^{\cal T}(T)\leq  \frac{|\lambda_h|}{n-1+2n\cos^2 \frac{h\pi}{n+1}}(n-1+2n\left|\cos \frac{h\pi}{n+1}\right|\cos\frac{\pi}{n+1}).
\end{equation}
\end{remark}

\section{The definite case} \label{sec4}

If $T=(n;\delta,\sigma)$ is positive or negative definite, then the smallest eigenvalue in magnitude is either $\lambda_n$ or $\lambda_1$, according to the sign of $\sigma$. 
Following our analysis on the eigenvalue structured sensitivity, cf.  Section \ref{sec2},   the two extremal eigenvalues in the spectrum of $T$ have the largest structured condition numbers. Therefore, since there exists $k\in \{1,n\}$ such that $|\lambda_k|<|\lambda_h|$, for $h=1,\dots,n$, $h\ne k$, then the same eigenvalue $\lambda_k$ satisfies the inequalities in \eqref{strcond}. Hence, by Corollary \ref{c_dFT3}, the structured distance to  singularity  is given by 
the ratio between the absolute value of the smallest eigenvalue in magnitude and its structured condition number. 
\begin{remark}
Note that the approach here is the opposite of the one in \cite[Example 4.6]{NR21}, where the authors
were concerned with the {\it structured distance to symmetric positive 
semidefiniteness}  $\Delta_F^{\cal{T}}(T)$ of  (indefinite) matrices $T\in{\cal T}$. In more detail, by \cite[Theorem 4.5]{NR21}, one has
\[
\Delta_F^{\mathcal{T}}(T)\leq\min\sqrt{2(n-1)\sigma^2, n\max\left\{0,2|\sigma|\cos\frac{\pi}{n+1}- 
\delta\right\}^2}, \;\;\;\;\;\;\;\; \rm{if} \;\; \delta>0,
\]
\[
\Delta_F^{\mathcal{T}}(T)\leq\min\sqrt{2(n-1)\sigma^2+n\delta^2,n\left(2|\sigma|
\cos\frac{\pi}{n+1}-\delta\right)^2}, \;\;\;\;\;\;\;\; \rm{if} \;\; \delta\leq0 .
\]
\end{remark}

\subsection{About the the structured distance to singularity in the spectral norm} 
Remark \ref{rmknrm1} shows lower and upper bounds for the structured distance to singularity in the spectral norm, $d_2^{\cal T}(T)$,  of any symmetric tridiagonal Toeplitz matrix $T=(n;\delta,\sigma)$.
In the definite case,  such structured distance equals the (unstructured) distance to singularity $d_2(T)$. Indeed, since the $k$th eigenvalue of $T$,  with either $k=1$ or $k=n$, is such that 
\begin{equation*}
|\lambda_k|<|\lambda_h| \quad \mbox{and}\;\;	\frac{|\lambda_k|}{\kappa^{\cal T}(\lambda_k)}<\frac{|\lambda_h|}{\kappa^{\cal T}(\lambda_h)}, \quad \mbox{with}\;\;  h=1,\dots,n,\;\; h\ne k,
\end{equation*}
then by  \eqref{norm2} one has 
$$
|\lambda_k|=d_2(T)\leq d_2^{\cal T}(T)\leq  \frac{|\lambda_k|}{n-1+2n\cos^2\frac{\pi}{n+1}}(n-1+2n\cos^2\frac{\pi}{n+1})=|\lambda_k|,
$$
so that
$$
d_2(T)= d_2^{\cal T}(T)=|\lambda_k|=d_F(T)<d_F^{\cal T}(T)=\frac{|\lambda_k|}{\kappa^{\cal T}(\lambda_k)}. 
$$
We emphasize that we have shown that the equality 
$$d_2^{\cal T}(T)=\frac{1}{\|T^{-1}\|_2}$$
holds true if and only if $T=(n;\delta,\sigma)$ is positive or negative definite. Thus,  following the analysis in the work of Rump \cite{R12003}, in such a case one has that the reciprocal of the spectral condition number of $T$  is equal to its relative structured distance to the closest singular matrix with respect to the the spectral norm:
\[
\frac{1}{\kappa_2(T)}=\min_{\Delta T \in {\cal T}} \left\{\frac{\|\Delta T\|_2}{\|T\|_2} : T+\Delta T \in {\cal S} \right\}.
\]
\subsection{The discrete Laplacian} 
Eq. \eqref{n3} gives the following estimate for  the ratio between the unstructured  
and the structured distances to singularity of a large definite matrix $T\in {\cal T}$,
\[
\frac{d_{F}(T)}{d_{F}^{\cal T}(T)}\approx\sqrt{\frac{3}{n}}.
\] 

Consider as an example the discrete Laplacian $T(n)=(n; 2,-1)$. One observes that
$\lambda_1(n)$ is the smallest eigenvalue of $T(n)$ and, in a asymptotical perspective, one has
 $$
 \lambda_1(n)=2-2\cos \frac{\pi}{n+1}=\pi^2\cdot \frac{1} {n^2}\left(1+{\cal O}\left(\frac{1}{n}\right)\right)\,,
$$
whereas
$$
\frac{\lambda_1(n)}{\kappa^{\cal T}(\lambda_1(n))}=\frac{\lambda_1(n)}{\sqrt{\frac{1}{n} +\frac{2}{n-1}\cos^2 \frac{\pi}{n+1}}}=\frac{\pi^2}{\sqrt{3}}\cdot \frac{1}{n^{\frac 3 2}}\left(1+{\cal O}\left(\frac{1}{n}\right)\right)\,.
$$
Thus, as it is clear by measuring its structured (or unstructured) distance to singularity either in the spectral norm or in the Frobenius norm, $T(n)$ is asymptotically singular. In more detail, since in Theorem \ref{fins} one has $h=1$,  the closest matrix in ${\mathcal S}\cap{\mathcal T}$ to $T(n)$ in the Frobenius norm is $S_*^{\mathcal T}(n)=(n;\delta_*(n),\sigma_*(n))$, with $\delta_*(n)$, $\sigma_*(n)$ as in \eqref{sdstar} and  $c(n)=\cos \frac{\pi}{n+1}$, so that
$$
\lim\limits_{n\to \infty}c(n)=\lim\limits_{n\to \infty}\cos \frac{\pi}{n+1}=1.
$$
Thus, the entries of $S_*^{\mathcal T}(n)$ satisfy
\begin{equation*}
\lim\limits_{n\to \infty}\delta_*(n)= \lim\limits_{n\to \infty}\frac{2(2nc(n)^2+(n-1)c(n))}{2nc(n)^2+n-1}=2,
\end{equation*}
\begin{equation*}
\lim\limits_{n\to \infty}\sigma_*(n)= -\lim\limits_{n\to \infty}\frac{(n-1)+2nc(n)}{n-1+2nc(n)^2}=-1.
\end{equation*}
For the sake of completeness, we remark that an explicit formula for the inverse of $T(n)$ is given in \cite[Theorem 2.8]{Me}:
\[
[T^{-1}(n)]_{i,j}=i\,\frac{n-j+1}{n+1}. 
\]

\subsection{About the pattern of the Cholesky factor}\label{cholf}
In \cite[Lemma 1]{LMV}, Laudadio et al. showed that the diagonal entries of the upper triangular factor $R$ of the Cholesky factorization  of a symmetric positive definite Toeplitz matrix $A=R^TR$  decrease monotonically with increasing row number. We are interested in further  monotonicity properties of the entries of the Cholesky factor $R$  of a symmetric positive definite tridiagonal Toeplitz matrix, which can be easily obtained and will be useful to illustrate the numerical tests regarding positive definite matrices in ${\cal T}$, cf. Example 1 in Section \ref{sec5}. We collect  such results in the following proposition.

\begin{proposition}\label{t_mon1}
Let $T=(n;\delta,\sigma)$, $\sigma\ne0$,  be positive definite and let $R=(r_{i,j})$, $i,j=1,\dots,n$ be its Cholesky factor so that $T=R^TR$. Then, $R$ is upper bidiagonal with the entries of the diagonal  satisfying
\begin{equation}\label{ri}
r_{i-1,i-1}\geq r_{i,i}>0, \;\; i=2,\dots,n\,,
\end{equation}
and the entries of the upper diagonal having the same sign of $\sigma$ and satisfying
\begin{equation}\label{absineq}
|r_{i-1,i}|\leq |r_{i,i+1}|, \;\; i=2,\dots,n-1\,.
\end{equation}
Moreover, if $\delta\geq 2|\sigma|$, one has
\begin{equation}\label{last}
r_{i-1,i-1}>|r_{i-1,i}| \;\;\;\;\;\; \mbox{and} \;\;\;\;\;\; r_{i,i}> |r_{i-1,i}|,  \;\;\;\;\;\; i=2,\dots,n\,.
\end{equation}
\end{proposition}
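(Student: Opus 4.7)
The plan is to translate each claim into a recurrence for the entries of $R$ and then argue by induction. Writing $a_i=r_{i,i}$ and $b_i=r_{i,i+1}$, the identity $T=R^TR$ combined with the tridiagonal Toeplitz structure of $T$ forces $R$ to be upper bidiagonal and yields (with the convention $b_0=0$) the two recurrences
\[
a_i^2+b_{i-1}^2=\delta,\qquad a_ib_i=\sigma,\qquad i=1,\dots,n.
\]
Positive definiteness of $T$ makes all $a_i$ strictly positive, so $b_i=\sigma/a_i$ immediately inherits the sign of $\sigma$. Setting $x_i:=a_i^2$, the two relations combine into the scalar recurrence $x_{i+1}=\delta-\sigma^2/x_i$ with $x_1=\delta$, which is the workhorse of the proof.

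For \eqref{ri} I will show that $\{x_i\}$ is monotonically nonincreasing. Since $x_1=\delta$ and $x_2=\delta-\sigma^2/\delta\leq x_1$, the base case holds, and assuming $x_{i-1}\geq x_i>0$ one gets $\sigma^2/x_{i-1}\leq\sigma^2/x_i$, hence $x_i=\delta-\sigma^2/x_{i-1}\geq\delta-\sigma^2/x_i=x_{i+1}$. Taking positive square roots yields \eqref{ri}. Property \eqref{absineq} is then automatic, since $|b_i|=|\sigma|/a_i$ is increasing whenever $a_i$ is decreasing.

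For the strict diagonal dominance \eqref{last}, assume $\delta\geq2|\sigma|$. The inequality $r_{i-1,i-1}>|r_{i-1,i}|$ is equivalent to $x_{i-1}>|\sigma|$, while $r_{i,i}>|r_{i-1,i}|$ is equivalent to $x_ix_{i-1}>\sigma^2$; so it suffices to show $x_i>|\sigma|$ for every $i$ appearing in the proposition. The key observation is that the map $\varphi(x):=\delta-\sigma^2/x$ is increasing on $(0,\infty)$, and its fixed points are the roots of $x^2-\delta x+\sigma^2=0$, the largest being $x_*=(\delta+\sqrt{\delta^2-4\sigma^2})/2$, which is well defined and satisfies $x_*\geq|\sigma|$ precisely under the hypothesis $\delta\geq2|\sigma|$. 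Since $x_1=\delta>x_*$, induction using the monotonicity of $\varphi$ gives $x_i>x_*\geq|\sigma|$ for all $i$, and therefore $x_ix_{i-1}>x_*^2\geq\sigma^2$ as well. Both estimates in \eqref{last} follow.

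The only non-routine step is the last one: identifying the correct fixed point $x_*$ of the recurrence and establishing $x_i>x_*\geq|\sigma|$ for all admissible $i$. Once the monotonicity of $\varphi$ and the lower bound $x_*\geq|\sigma|$ under $\delta\geq2|\sigma|$ are in hand, everything else is bookkeeping; the rest of the proposition reduces to the monotone behavior of the recursion $x\mapsto\delta-\sigma^2/x$.
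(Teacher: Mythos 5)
Your proof is correct, and it takes a genuinely different route from the paper's. Where you reduce everything to the scalar recursion $x_{i+1}=\delta-\sigma^2/x_i$ for $x_i=r_{i,i}^2$ and then analyze its monotone orbit and fixed points, the paper works directly with the entry identities $\delta=r_{i,i}^2+r_{i-1,i}^2$ and $\sigma=r_{i-1,i-1}r_{i-1,i}$. For \eqref{ri} the paper simply cites [LMV, Lemma 1], whereas you give a self-contained induction on $\{x_i\}$, which is a nice gain in self-sufficiency. For \eqref{absineq} both arguments are essentially the same single-line manipulation. The real divergence is in the proof of \eqref{last}: the paper establishes two equivalences,
\[
r_{i-1,i-1}>|r_{i-1,i}|\;\Longleftrightarrow\; r_{i,i}>|r_{i-1,i}|
\quad\text{and}\quad
r_{i,i}>|r_{i-1,i}|\;\Longleftrightarrow\; r_{i,i}>|r_{i,i+1}|,
\]
and then chains them by induction from the base estimate $r_{1,1}\ge 2|r_{1,2}|$. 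Your argument instead identifies the larger fixed point $x_*=\bigl(\delta+\sqrt{\delta^2-4\sigma^2}\bigr)/2$ of the increasing map $\varphi(x)=\delta-\sigma^2/x$, observes that $x_1=\delta>x_*\ge|\sigma|$ (strict since $\sigma\ne 0$), and concludes by monotone iteration that $x_i>x_*$ for all $i$, which simultaneously delivers $x_{i-1}>|\sigma|$ and $x_i x_{i-1}>\sigma^2$, i.e.\ both inequalities in \eqref{last}. This is cleaner and more conceptual: it explains \emph{why} the condition $\delta\ge 2|\sigma|$ enters (it is exactly when the fixed points of $\varphi$ are real and the orbit is bounded away from $\sqrt{|\sigma|}$), and it packages the two inequalities in \eqref{last} as a single invariant of the recursion rather than as a pair of back-and-forth implications. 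The only thing worth making explicit in your write-up is that the strict positivity $x_i>0$ used in the induction is supplied by the existence of the Cholesky factorization with positive diagonal (i.e.\ by positive definiteness of $T$), not by the recursion itself; you do assume this implicitly, and it is true, but it deserves a word.
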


\begin{proof}
It is well known that the Cholesky factor of a symmetric positive definite tridiagonal matrix is upper bidiagonal. The inequalities in \eqref{ri} are proved  in \cite[Lemma 1]{LMV}.  The entries of $T$  satisfy 
\begin{equation}\label{sigma}
\delta=r_{i,i}^2+r_{i-1,i}^2\;\;\;\;\;\; \mbox{and} \;\;\;\;\;\;\sigma=r_{i-1,i}r_{i-1,i-1},  \;\;\;\;\;\;\;\;\; i=2,\dots,n\,.
\end{equation}
The equalities satisfied by $\sigma$ in \eqref{sigma} imply that all the entries of the upper diagonal of $R$ have the same sign as $\sigma$  and that
$$
|r_{i-1,i}|=|r_{i,i+1}|\frac{r_{i,i}}{r_{i-1,i-1}}\leq |r_{i,i+1}|, \;\; i=2,\dots,n-1\,
$$
where the inequality is due to \eqref{ri}. This concludes the first part of the proof.

Let $\delta$ and $\sigma$ satisfy $\delta\geq 2|\sigma|>0$. Firstly, we prove that
\begin{equation}\label{ru}
r_{i-1,i-1}> |r_{i-1,i}| \;\; \Longleftrightarrow \;\; r_{i,i}> |r_{i-1,i}|, \;\;\;\;\;\;\;\; i=2,\dots,n\,.
\end{equation}
Assume that $r_{i-1,i-1}> |r_{i-1,i}| $. Thanks to \eqref{sigma}, one has
\[
r_{i,i}^2=\delta-r_{i-1,i}^2\geq 2|\sigma|-r_{i-1,i}^2=2|r_{i-1,i}| r_{i-1,i-1}-r_{i-1,i}^2> 2r_{i-1,i}^2-r_{i-1,i}^2=r_{i-1,i}^2,
\]
where in the last inequality we have used that $r_{i-1,i-1}> |r_{i-1,i}|$. This gives  $r_{i,i}> |r_{i-1,i}|$. The viceversa is 
assured  by \eqref{ri}. Then, we prove
\begin{equation}\label{ur}
r_{i,i}>|r_{i-1,i}| \;\;  \Longleftrightarrow\;\; r_{i,i}>|r_{i,i+1}|, \;\;\;\;\;\;\;\; i=2,\dots,n-1\,.
\end{equation}
Assume that $r_{i,i}> |r_{i-1,i}|$. By  \eqref{sigma}, one has
\[
r_{i,i}^2=\frac{r_{i,i}^2+r_{i,i}^2}{2}>\frac{r_{i,i}^2+r_{i-1,i}^2}{2}=\frac{\delta}{2}\geq{|\sigma|}=|r_{i,i+1}|r_{i,i},
\]
This gives  $r_{i,i}> |r_{i,i+1}|$. The viceversa is assured  by \eqref{absineq}. In order to prove \eqref{last}, we observe  that $r_{1,1}\geq 2\,|r_{1,2}|$. Indeed, $r_{1,1}^2=\delta$, by construction, so that, by \eqref{sigma},  one has $r_{1,1}^2\geq 2|\sigma|=2\, |r_{1,2}|r_{1,1}$, which implies that $r_{1,1}\geq 2|r_{1,2}|$. Now, the proof follows by recursively applying the  $\Longrightarrow$ implications  in Eqs. \eqref{ru} and  \eqref{ur},  from $r_{1,1}> |r_{1,2}| \;\;  \Longrightarrow\;\; r_{2,2}> |r_{1,2}|$ till $r_{n-1,n-1}> |r_{n-1,n}| \;\;  \Longrightarrow\;\; r_{n,n}> |r_{n-1,n}|$. This concludes the proof.
\end{proof}

\section{The indefinite case} \label{secx}
We are interested in analyzing  the cases when the closest matrix in ${\mathcal S}\cap{\mathcal T}$ to T in the Frobenius norm is not unique. As we commented above, this can happen only in the indefinite case.
 
 \subsection{The case $\boldsymbol{\delta}\, \mathbf{=0}$} \label{subsec22} 
 
We have to distinguish two cases.
If $\delta=0$ and $n$ is odd, then $T$ is singular ($d_{F}(T)=d_{F}^{\cal T}(T)=0$), whereas
if $\delta=0$ and $n$ is even, then the eigenvalues $\lambda_{n/2}$ and $\lambda_{(n+2)/2}$ have opposite signs and the same smallest absolute value, because 
$\cos \frac {n \pi}{2(n+1)}=-\cos \frac {(n+2) \pi}{2(n+1)}$.
In the latter case, 
\[
d_{F}(T)=| \lambda_{n/2}|=| \lambda_{(n+2)/2}|
\]
 and, by the analysis in Section \ref{sec2}, one proves that
\[
d_{F}^{\cal T}(T)=\frac{| \lambda_{n/2}|}{\kappa^{\cal T}(\lambda_{n/2})}=\frac{| \lambda_{(n+2)/2}|}{\kappa^{\cal T}(\lambda_{(n+2)/2})}.
\]
Indeed, if $\delta=0$,  then $ |\lambda_{k}|\leq|\lambda_{h}|$
implies 
$ |\lambda_{k}|/\kappa^{\cal T}(\lambda_{k})\leq |\lambda_{h}|/\kappa^{\cal T}(\lambda_{h}),$
and the equalities hold only for $\lambda_{n/2}$ and $\lambda_{(n+2)/2}=-\lambda_{n/2}$.
Thus, there are two closest matrices in ${\mathcal S}$ to $T$ in the Frobenius norm, i.e.,
\[
T\pm\lambda_{n/2}(\x_{n/2}\x_{n/2}^T),
\]
and  two closest matrices in ${\mathcal S}\cap{\mathcal T}$ to $T$ in the Frobenius norm, i.e.,
\[
T\pm\lambda_{n/2}\frac{(\x_{n/2}\x_{n/2}^T) |{\cal T}}{\|(\x_{n/2}\x_{n/2}^T) |{\cal T}\|_{F}^2}.
\]
Eq. \eqref{n2} gives, when $n$ is large, the following estimate for  the ratio between the unstructured  
and the structured distances to singularity of $T$ 
\[
\frac{d_{F}(T)}{d_{F}^{\cal T}(T)}\approx\sqrt{\frac{1}{n}}.
\] 

Consider as an example $T(n)=(n;0,\sigma)\in{\cal T}$, with $\sigma>0$. One observes that, setting $c(n)=\cos \frac{n\pi}{2(n+1)}$
for the smallest eigenvalue of $T(n)$, in a asymptotical perspective, one has
 $$
\lambda_{\frac n 2}=-\lambda_{\frac {n+2} {2}}=2\sigma c(n)= {\cal O}\left(\frac{1} {n}\right),
$$
whereas
$$
\frac{ |\lambda_{\frac n 2}|}{\kappa^{\cal T}(\lambda_{\frac n 2})}=\frac{\lambda_{\frac n 2}}{\kappa^{\cal T}(\lambda_{\frac n 2})}=-\frac{\lambda_{\frac {n+2} {2}}}{\kappa^{\cal T}(\lambda_{\frac {n+2} {2}})}=   \frac{2\sigma c(n)}{\sqrt{\frac{1}{n} +\frac{2}{n-1}c(n)^2}} = {\cal O}\left(\frac{1} {\sqrt{n}}\right)\,.
$$
On the contrary, if one has $\sigma<0$, then the signs of $\lambda_{\frac n 2}$ and $\lambda_{\frac {n+2} {2}}$ are the opposite, however the conclusion is the same:
$T(n)$ is asymptotically singular. Its (unstructured) distance to singularity is infinitesimal of order $\frac{1} {n}$ whereas its structured distance to singularity is an infinitesimal of order $\frac{1} {\sqrt{n}}$. Moreover, since
$
\lim\limits_{n\to \infty} \pm c(n)=0,
$
the entries of both the closest matrices in ${\mathcal S}\cap{\mathcal T}$ to $T$ in the Frobenius norm, satisfy
\begin{equation*}
\lim\limits_{n\to \infty}\delta_*(n)^{\pm}= \mp\lim\limits_{n\to \infty}\frac{2(n-1)c(n)\sigma }{n-1+2nc(n)^2}=0,
\end{equation*}
\begin{equation*}
\lim\limits_{n\to \infty}\sigma_*(n)^{\pm}= \lim\limits_{n\to \infty}\frac{(n-1)\sigma}{n-1+2nc(n)^2}=\sigma.
\end{equation*}

\subsection{The case $\boldsymbol{\delta}\, \mathbf{\ne 0}$}  \label{subsec23}

If the diagonal entry of $T$ in not trivial  and there is not a unique eigenvalue with the smallest magnitude (i.e., if the property required by Eckart-Young 
 theorem is not satisfied), then on the contrary  there exists a unique closest matrix in ${\mathcal S}\cap{\mathcal T}$ to $T$ in the Frobenius norm. This is good news. However, we are interested in analyzing the opposite case where there exists a unique closest matrix in $\mathcal S$ to $T$ in the Frobenius norm but \eqref{strcond} is not satisfied.  
 
 It is straightforward that if  $\lambda_{k_1}$ and $\lambda_{k_2}$ are two eigenvalues with the smallest magnitude, then they are of opposite sign and $k_1=k_2\pm1$. Moreover, one can observe that the same happens also for the  eigenvalues $\lambda_{k_1}$ and $\lambda_{k_2}$, with $|\lambda_{k_1}|\ne|\lambda_{k_2}|$,   satisfying 
\begin{equation}\label{eqq2}
\frac{|\lambda_{k_1}|}{\kappa^{\cal T}(\lambda_{k_1})}= \frac{|\lambda_{k_2}|}{\kappa^{\cal T}(\lambda_{k_2})}<\frac{|\lambda_h|}{\kappa^{\cal T}(\lambda_h)}, \quad \mbox{with}\;\;  h=1,\dots,n,\; \mbox{and}\;\; h\notin \{ k_1,k_2 \}.
\end{equation}
In more detail,  since the sequence $\{|\lambda_h|\}_{h=1,\dots n}$ first decreases and then increases, then:

\noindent -- if $n$ is odd,  there can exist only two eigenvalues $\lambda_{k_1}$ and $\lambda_{k_2}$ of opposite sign, with consecutive indices  both belonging either to $\{1,\dots n_2\}$ or  to $\{n_2,\dots n\}$, where $n_2$  denotes the smallest integer greater than $n/2$, that satisfy \eqref{eqq2}, because the sequence $\{1/\kappa_F^{\cal T}(\lambda_h)\}$ increases for $h=1,\dots n_2$  and decreases for $h=n_2,\dots n$;

\noindent -- if $n$ is even,  there can exist only two eigenvalues $\lambda_{k_1}$ and $\lambda_{k_2}$ of opposite sign, with consecutive indices  both belonging either to $\{1,\dots n/2\}$ or  to $\{n/2+1,\dots n\}$, that satisfy \eqref{eqq2}, because the sequence  $\{1/\kappa_F^{\cal T}(\lambda_h)\}$ increases for $h=1,\dots n/2$ and decreases for $h=n/2+1,\dots n$.

Finally, it can happen that both the closest matrices in ${\mathcal S}$ and in ${\mathcal S}\cap{\mathcal T}$ are unique whereas the eigenvalue of smallest magnitude is not the same as the eigenvalue satisfying \eqref{strcond}.
\section{Numerical examples} \label{sec5}
This section provides four representative examples for the cases that have been analyzed throughout the paper and also reports remarks about the monotonicity properties of the entries of the Cholesky factor of positive definite matrices in $\cal T$ that have been investigated in Subsection \ref{cholf} (cf. Remark \ref{pat}) as well as  a quantitative test designed to illustrate the cases relevant to  indefinite matrices in $\cal T$ that have been investigated in Subsection \ref{subsec23} (cf. Remark \ref{last}). The four examples mentioned above address  the case of  positive definite test matrices (Example 1), the case where there is neither a unique closest matrix in ${\mathcal S}$ nor in ${\mathcal S}\cap{\mathcal T}$ to the test matrix (Example 2),  the case where the smallest eigenvalue in magnitude is not unique but there is an eigenvalue that satisfies \eqref{strcond} (Example 3), and the case where the unique smallest eigenvalue in magnitude is not the same as the unique eigenvalue that satisfies \eqref{strcond} (Example 4).  
Since the eigenvalues and eigenvectors of symmetric tridiagonal Toeplitz matrices are known in closed form, and all ingredients of our analysis are easily computable, assessing the theoretical results has been straightforward. All computations were carried out in MATLAB R2022b  with about $16$ significant decimal digits on an iMac with a 3,2 GHz Intel Core i7 6 core and equipped with 16 GB of RAM.

\medskip

\noindent {\bf Example 1.}
Consider  $T=(1000; 2,-1)$. This test matrix is, as it is well known, symmetric positive definite.
The smallest eigenvalue is $\lambda_{1}$,  hence $d_{F}(T)=\lambda_{1}=9.8499 \cdot 10^{-6}$. 
Since $\kappa^{\cal T}(\lambda_{1})$ results to be equal to $5.4790 \cdot 10^{-2}$ (close to $\sqrt{3/1000}=5.4772 \cdot 10^{-2}$), one has
$d_{F}^{\cal T}(T)=\lambda_{1}/\kappa^{\cal T}(\lambda_{1})=1.7977 \cdot 10^{-4}$.  As for the structured distance of $T$ to singularity in the spectral norm, one has  $d_{2}(T)=d_{2}^{\cal T}(T)=\lambda_{1}=9.8499 \cdot 10^{-6}$.
The left plot of Figure \ref{f_SPD1} shows the inverse of $R$, where $R$ is the Cholesky factor of $T$, whereas the right plot shows the image of the (centro-symmetric) inverse of $T$, i.e., $T^{-1}= R^{-1}R^{-T}$. The observed pattern of $R^{-1}$ may be considered an inheritance of the monotonicity properties satisfied by  the entries of $R$, as illustrated in Proposition \ref{t_mon1}, which in turn determine monotonicity properties for the entries of $R^{-1}$. Indeed, as shown in the left plot of Figure \ref{f_SPD1}, in the upper triangular matrix $R^{-1}$, all the structure entries  are positive and, in each row, monotonically decreasing with increasing column number. Moreover the entries of each structure diagonal are seen to be monotonically increasing with increasing row number. 
\medskip

\begin{remark}\label{pat}
We report that in all tests with symmetric positive definite tridiagonal Toeplitz matrices with $\sigma<0$, the entries of the extra-diagonals of the inverse of the Cholesky factor are  positive and with the same monotonicity properties as in Example 1, whereas, in case of $\sigma>0$, such monotonicity properties apply to the absolute values of the entries.  Finally, in line with the results about the well known issue of the exponential off-diagonal decay behavior occurring for the entries of the inverse of a symmetric positive definite tridiagonal Toeplitz matrix $T$ (see, e.g., \cite{DMS,EP,Me}),  we report that in all tests one has that the larger the (un)structured distance to singularity of $T$ is, the more the large entries of the inverse matrix $T^{-1}$ - or,  in case of  $\sigma>0$, the large entries in magnitude of the latter -  tend to squeeze towards the diagonal.
\end{remark}

\begin{figure}[tbp]\
\centering
\includegraphics[scale=0.34]{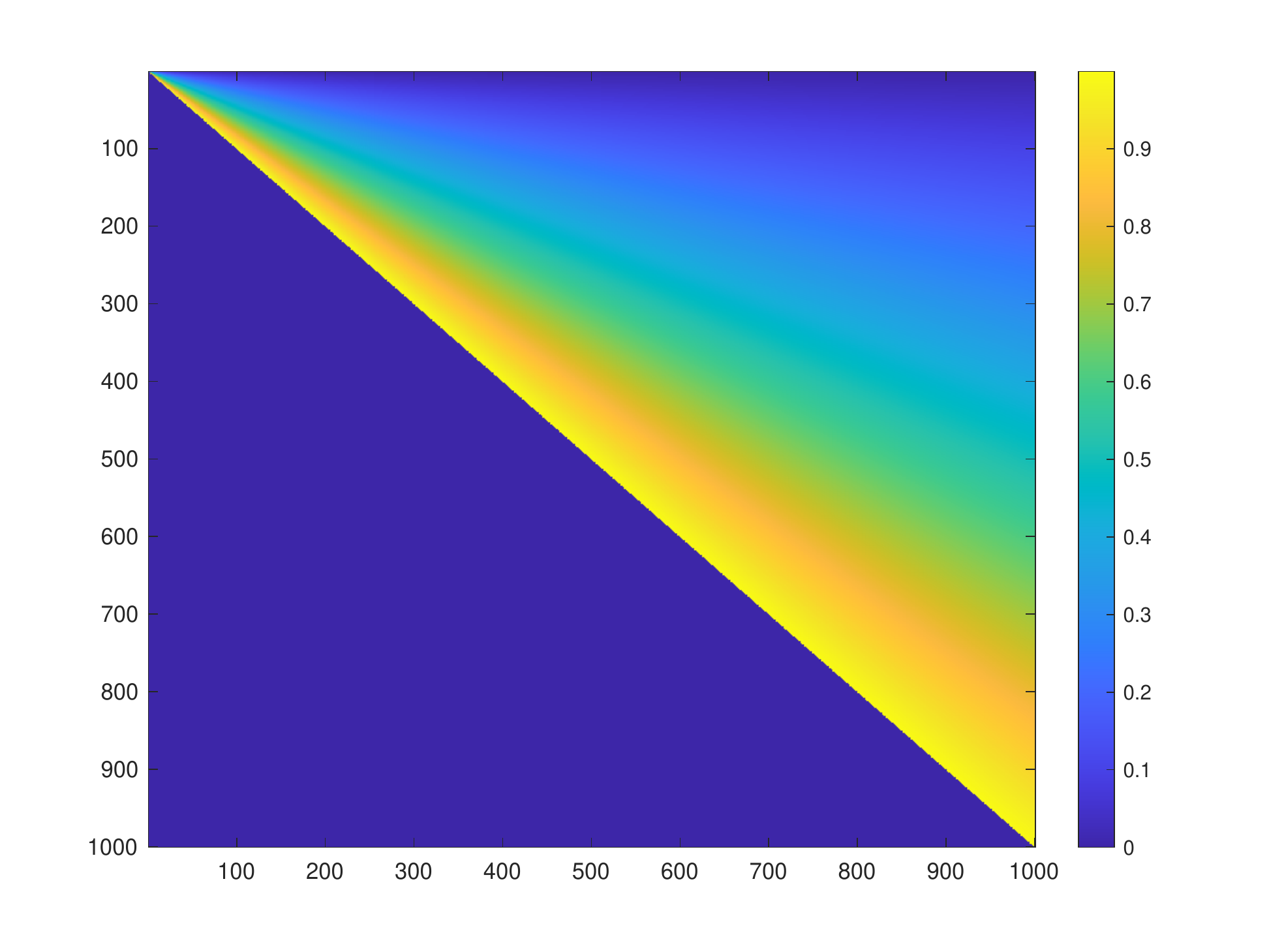}\includegraphics[scale=0.34]{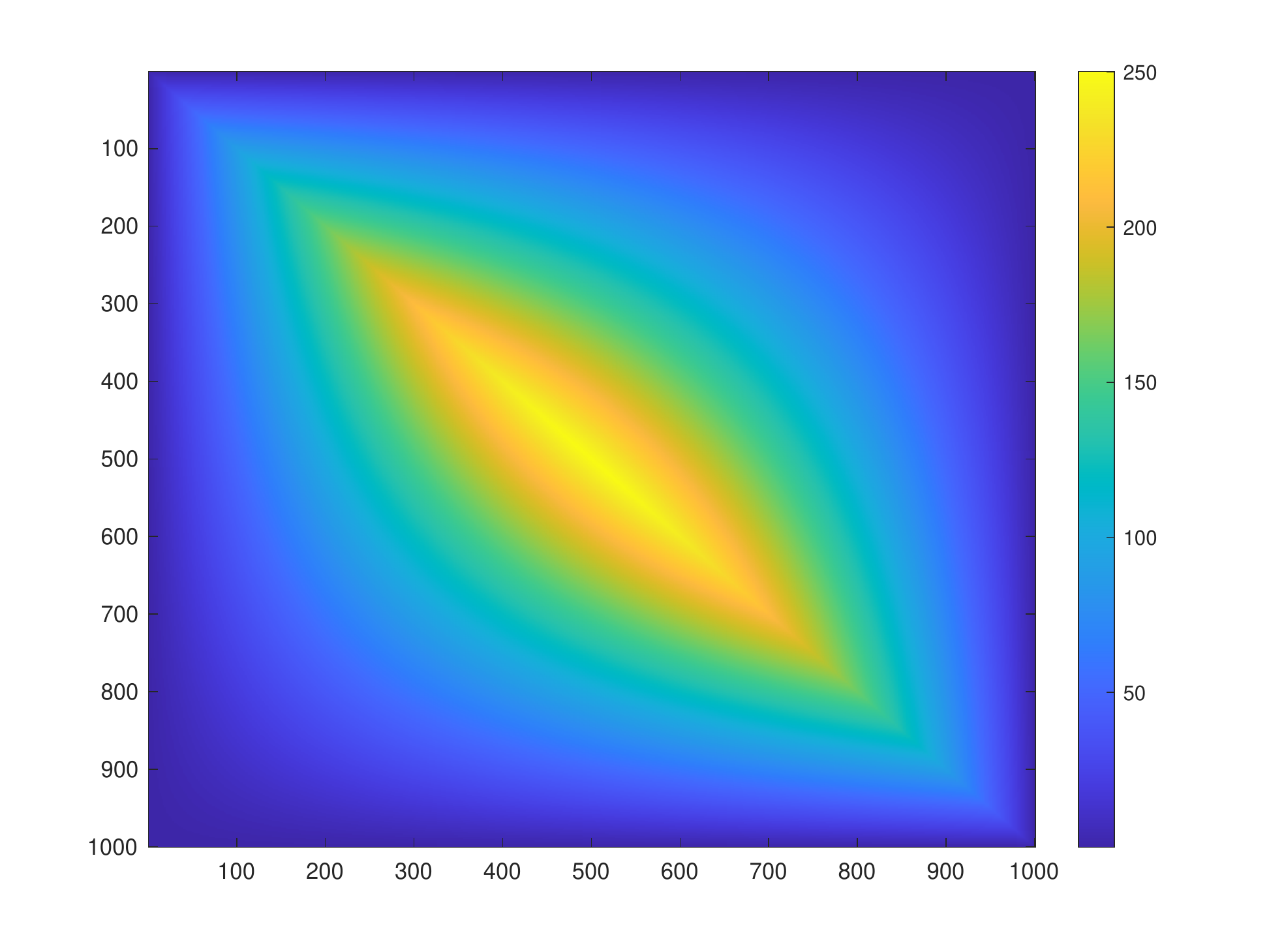}
\caption{Example 1. Images of  $R^{-1}$ and $T^{-1}$, obtained by using the MATLAB {\rm imagesc} function, with {\rm colorbar}.}\label{f_SPD1}
\end{figure}

\medskip
 
\noindent {\bf Example 2.}
Consider the symmetric tridiagonal Toeplitz matrix $T=(1000; 0,1)$.
The eigenvalues $\lambda_{500}$ and $\lambda_{501}$ are both the smallest in magnitude, and the distance to singularity is given by  $d_{F}(T)=\lambda_{500}=-\lambda_{501}=3.1385\cdot 10^{-3}$. Thus, there are two closest matrices in ${\mathcal S}$ to $T$.
As for the structured condition numbers, one has  $\kappa^{\cal T}(\lambda_{500})= \kappa^{\cal T}(\lambda_{501})=3.16229\cdot 10^{-2}$ (which is approximatively equal to $\sqrt{1/1000}=3.16228 \cdot 10^{-2}$), so that the structured distance to singularity is given by
$d_{F}^{\cal T}(T)=\lambda_{500}/\kappa^{\cal T}(\lambda_{500})=-\lambda_{501}/\kappa^{\cal T}(\lambda_{501})=9.9246\cdot 10^{-2}$.  Also, there are two closest singular symmetric tridiagonal Toeplitz matrices, as commented in Subsection \ref{subsec22}.

\medskip
 
\noindent {\bf Example 3.} 
Consider $T=(9; \cos \frac{\pi}{20},-\sqrt{2}/2)$. In this example, $n$, $\sigma$ and $\delta$ are chosen in a way that $T$ has two eigenvalues equally distant to the origin. 
In detail,  the eigenvalues $\lambda_2 =-1.5643\cdot 10^{-1}$ and $\lambda_3=1.5643\cdot 10^{-1}$   are the smallest in magnitude (and opposite in sign). Thus, $d_{F}(T)=1.5643\cdot 10^{-1}$. Table \ref{tab1} displays the eigenvalues and their structured      
condition numbers in the second and third columns. The smallest structured condition number is  relevant to the central eigenvalue  $\lambda_5=\cos \frac{\pi}{20}= 9.8769\cdot 10^{-1}$, with $\kappa^{\cal T}(\lambda_5)=\sqrt{1/9}=3.3333\cdot 10^{-1}$.  The farther from $\lambda_5$, the larger the structured condition numbers. 
Also, Table  \ref{tab1} displays the eigenvalues of both structured and unstructured closest singular matrices. In detail, the fourth column shows the eigenvalues of $S_*^{\cal T}$ and the fifth column displays the corresponding eigenvalues of $S_*=\sum_{h=1,\dots, n, h\ne 2}\lambda _{h}\x_h\x_h^T$ (we choose $k=2$ because $\lambda_2$ is the unique eigenvalue to satisfy also \eqref{strcond}). The latter eigenvalues are  seen to coincide with  the eigenvalues $\lambda_h$ in the second column, for any $h=1,\dots,9$, $h\ne 2$. In both the fourth and fifth columns the ``zeroed'' eigenvalue (i.e., the second one) is shown in bold face.
\begin{table}[bt]
\begin{center}
\begin{tabular}{ccccc}
$h$ & $\lambda_h$ & $\kappa^{\cal T}(\lambda_h)$ & $\lambda_h(S_*^{\cal T})$& $\lambda_h(S_*)$  \\  
\hline
$1$ & $-3.5731\cdot 10^{-1}$ & $5.8072\cdot 10^{-1}$& $-1.8452\cdot 10^{-1}$  & $-3.5731\cdot 10^{-1}$ \\  
$2$ & $-1.5643\cdot 10^{-1}$ & $5.2415\cdot 10^{-1}$& $\phantom{22}{\bf 2.2204\cdot 10^{-16}}$ & $\phantom{2}{\bf -5.4879\cdot 10^{-17}}$ \\  
$3$ & $\phantom{2}1.5643\cdot 10^{-1}$ & $4.4439\cdot 10^{-1}$& $\phantom{2}2.8739\cdot 10^{-1}$ & $\phantom{2}1.5643\cdot 10^{-1}$ \\  
$4$ & $\phantom{2}5.5067\cdot 10^{-1}$ & $3.6740\cdot 10^{-1}$ & $\phantom{2}6.4953\cdot 10^{-1}$ & $\phantom{2}5.5067\cdot 10^{-1}$ \\ 
$5$ & $\phantom{2}9.8769\cdot 10^{-1}$ & $3.3333\cdot 10^{-1}$ & $\phantom{2}1.0510\cdot 10^{-1}$  & $\phantom{2}9.8769\cdot 10^{-1}$ \\ 
$6$ & $1.4247\cdot 10^{0}$ & $3.6740\cdot 10^{-1}$  & $1.4524\cdot 10^{0}$   & $1.4247\cdot 10^{0}$ \\ 
$7$ & $1.8189\cdot 10^{0}$ & $4.4439\cdot 10^{-1}$ & $1.8145\cdot 10^{0}$  & $1.8189\cdot 10^{0}$ \\   
$8$ & $2.1318\cdot 10^{0}$ & $5.2415\cdot 10^{-1}$& $2.1019\cdot 10^{0}$ & $2.1318\cdot 10^{0}$ \\  
$9$ & $2.3327\cdot 10^{0}$ & $5.8072\cdot 10^{-1}$  & $2.2864\cdot 10^{0}$   & $2.3327\cdot 10^{0}$ \\  
\end{tabular}
\end{center}
\caption{Example 3. Eigenvalues $\lambda_h$ and their structured 
condition numbers $\kappa^{\cal T}(\lambda_h)$
for the matrix $T=(9; \delta, \sigma)$, where $\sigma = -\frac{\sqrt{2}}{2}$
and $\delta =\cos \frac{\pi}{20}$. Eigenvalues  $\lambda_h(S_*^{\cal T})$  and $\lambda_h(S_*)$, where $S_*^{\cal T}\in {\cal T}$ and $S_*$ are the projections of $T$ in ${\cal S}$ and ${\mathcal S}\cap{\mathcal T}$, respectively. The eigenvalues $\lambda_h(S_*)$ have been computed by using the MATLAB {\rm eig} function and numbered in increasing order to be compared both with $\lambda_h$ and $\lambda_h(S_*^{\cal T})$, $i=1,\dots,9$.}\label{tab1}
\end{table}

\medskip
 
\noindent {\bf Example 4.}
Consider  the symmetric tridiagonal Toeplitz matrix $T=(10; 1.8,-1)$.  The  unique eigenvalue smallest in magnitude is $\lambda_2=1.1749\cdot 10^{-1}$ while the  unique eigenvalue for which is smallest the ratio in \eqref{strcond} is $\lambda_1= -1.1899\cdot 10^{-1}$. Table  \ref{tabx} displays the eigenvalues $\lambda_h$ of $T$ and the relevant ratios $| \lambda_h|/\kappa^{\cal T}(\lambda_h)$. The projections of $T$ in ${\cal S}$ and ${\mathcal S}\cap{\mathcal T}$ are at distance $d_F(T)=1.1749\cdot 10^{-1}$ and $d_F^{\cal T}(T)=2.1560\cdot 10^{-1}$, respectively. 

\begin{table}[bt]
\begin{center}
\begin{tabular}{ccc}
$h$ & $\lambda_h$& $| \lambda_h|/\kappa^{\cal T}(\lambda_h)$  \\ 
\hline
$\phantom{1}1$ & $-1.1899\cdot 10^{-1}$  & $\phantom{1} 2.1560\cdot 10^{-1}$ \\ 
$\phantom{1}2$ & $\phantom{2}1.1749\cdot 10^{-1}$ & $\phantom{1} 2.3164\cdot 10^{-1}$ \\ 
$\phantom{1}3$ & $\phantom{2}4.9028\cdot 10^{-1}$ & $1.1094\cdot 10^{0}$ \\  
$\phantom{1}4$ & $\phantom{2}9.6917\cdot 10^{-1}$ & $2.6056\cdot 10^{0}$ \\ 
$\phantom{1}5$ & $1.5154\cdot 10^{0}$  & $4.6877\cdot 10^{0}$ \\  
$\phantom{1}6$ & $2.0846\cdot 10^{0}$   & $ 6.4487\cdot 10^{0}$ \\ 
$\phantom{1}7$ & $2.6308\cdot 10^{0}$  & $7.0730\cdot 10^{0}$ \\  
$\phantom{1}8$ & $ 3.1097\cdot 10^{0}$ & $7.0368\cdot 10^{0}$ \\ 
$\phantom{1}9$ & $3.4825\cdot 10^{0}$   & $6.8659\cdot 10^{0}$ \\  
$10$ & $3.7190\cdot 10^{0}$   & $6.7386\cdot 10^{0}$ \\  
\end{tabular}
\end{center}
\caption{Example 4. Eigenvalues  $\lambda_h$  and ratios $| \lambda_h|/\kappa^{\cal T}(\lambda_h)$.} \label{tabx}
\end{table}
 \begin{figure}[tbp]
\centering
\includegraphics[scale=0.35]{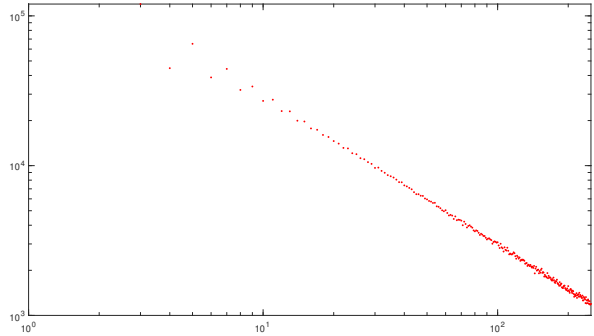}
\caption{Quantitative test (cf. Remark \ref{last}). In y-axis (in logarithmic-scale) the number of symmetric tridiagonal Toeplitz matrices of dimension $n$ where the unique eigenvalue of smallest magnitude is not the same as the  eigenvalue that satisfies \eqref{strcond}. In the x-axis  (in logarithmic-scale) the matrix dimension $n$.} \label{test}
\end{figure}

\medskip
\begin{remark}\label{last}
In order to have a quantitative idea of the incidence of cases where both the closest matrices in ${\mathcal S}$ and in ${\mathcal S}\cap{\mathcal T}$ are unique whereas the eigenvalue of smallest magnitude is not the same as the eigenvalue satisfying \eqref{strcond}, we tested $10^6$ indefinite matrices  $T=(n;\delta,\sigma)$, with $\delta\sigma\ne0$, for each dimension $n=2,\dots, 250$. Thus, the number of tested matrices  is almost $2.5\cdot 10^8$. 
In the investigation, $ \sigma$ and $\delta$  are random scalars drawn from the standard normal distribution at the condition that the cases where either $\delta\sigma=0$ or  $|\delta| > 2|\sigma|\cos(\pi/(n+1))$ are discarded. 
For any dimension $n$, not taking into account  the cases where there exist two eigenvalues having smallest magnitude or satisfying \eqref{eqq2}, we count the cases where the unique eigenvalue of smallest magnitude is not the same as the  eigenvalue that satisfies \eqref{strcond}.
The percentage of the such cases is  $0.4\%$ out of the total matrices tested. 
In Figure \ref{test}, we illustrate the trend of their incidence, with respect to the growing matrix dimension, by plotting  the relevant counter. 
\end{remark}

\section{Conclusions and future work} \label{sec6}
Given a real symmetric tridiagonal Toeplitz matrix $T$, this paper discusses the determination of its projection in the manifold  of similarly structured singular matrices.  While an unstructured analysis states both the same sensitivity to perturbations for each eigenvalue and the same distance to singularity of $T$ if any of its eigenvalues is the closest one to the origin, on the contrary a structured analysis reveals that the more extreme the eigenvalue closest to the origin, the more unsafe the situation, in line with the relevant structured eigenvalue sensitivity. Future work concerns the analysis of the structured distance to singularity of  symmetric tridiagonal Toeplitz-type matrices that are obtained by suitably modifying the first and last diagonal entries of a symmetric tridiagonal Toeplitz matrix, which have eigenpairs known in closed form \cite{Lo} and share with $T$ the property of having eigenvectors that do not depend on the matrix entries; see \cite[Proposition 4.15]{NR19}.

\section*{Acknowledgment}
The author is indebted  to the anonymous reviewers whose valuable comments and suggestions led to essential improvements in the analysis and presentation.

\end{document}